\documentclass{amsart}

\usepackage{amsthm}
\usepackage{thmtools}
\usepackage{amsfonts}
\usepackage{amssymb}
\usepackage{tikz-cd}
\usepackage{xcolor}
\usepackage{todonotes}
\setuptodonotes{inline}
\usepackage{bm}
\usepackage{hyperref}
\usepackage[shortlabels]{enumitem}

\definecolor{Dark Orchid}{RGB}{153,50,204}

\newcommand{\tuple}[1]{\overline{#1}}

\newcommand{\cat}[2]{#1\widehat{\phantom{\alpha}}\str{#2}}

\newcommand{\proves}{\vdash}

\newcommand{\str}[1]{\left\langle #1 \right\rangle}
\newcommand{\ca}[2]{#1\widehat{\phantom{\alpha}}#2}
\newcommand{\catt}[2]{#1\widehat{\phantom{\alpha}}\str{#2}}
\newcommand{\catttt}[4]{#1\widehat{\phantom{\alpha}}\str{#2}%
	\widehat{\phantom{\alpha}}\str{#3}\widehat{\phantom{\alpha}}\str{#4}}
\newcommand{\cattttt}[5]{#1\widehat{\phantom{\alpha}}#2%
	\widehat{\phantom{\alpha}}\str{#3}\widehat{\phantom{\alpha}}\str{#4}%
	\widehat{\phantom{\alpha}}\str{#5}}

\newcommand{\restr}{\restriction}
\newcommand\lex{\mathrm{lex}}

\newcommand{\cC}{\mathcal{C}}
\newcommand{\cD}{\mathcal{D}}

\newcommand{\cI}{\mathcal{I}}

\newcommand{\cL}{\mathcal{L}}
\newcommand{\cP}{\mathcal{P}}

\newcommand{\cS}{\mathcal{S}}
\newcommand{\cT}{\mathcal{T}}
\newcommand{\cU}{\mathcal{U}}

\newcommand{\bP}{\mathbb{P}}

\DeclareMathOperator{\ran}{\operatorname{ran}}

\DeclareMathOperator{\dom}{\operatorname{dom}}

\theoremstyle{plain}
\newtheorem{theorem}{Theorem}[section]

\newtheorem{prop}[theorem]{Proposition}
\newtheorem{lemma}[theorem]{Lemma}
\newtheorem{claim}[theorem]{Claim}

\newtheoremstyle{noname}{}{}{\itshape}{}{\bfseries}{.}{.5em}{\thmnote{#3}}
\theoremstyle{noname}
\newtheorem*{nonamethm}{Theorem}
\theoremstyle{definition}
\newtheorem{defn}[theorem]{Definition}

\newtheorem{rem}[theorem]{Remark}
\newtheorem{notation}[theorem]{Notation}

\numberwithin{equation}{section}

\begin{document}
	
\title{The Theory of the Ziegler degrees}
	
\author[Downey]{Rodney G.~Downey}
\address{School of Mathematics and Statistics\\
	Victoria University of Wellington\\
	P.O.~Box 600\\
	Wellington 6140\\
	NEW ZEALAND}
\email{\href{mailto:rod.downey@vuw.ac.nz}{rod.downey@vuw.ac.nz}}
\urladdr{\url{https://homepages.ecs.vuw.ac.nz/~downey/}}
	
\author[Lempp]{Steffen Lempp}
\address{Department of Mathematics\\
	University of Wisconsin-Madison\\
	Madison, WI 53706-1325\\
	USA}
\email{\href{mailto:lempp@math.wisc.edu}{lempp@math.wisc.edu}}
\urladdr{\url{http://www.math.wisc.edu/~lempp}}
	
\author[Scott]{Isabella Scott}
\address{Department of Mathematics\\
University of Wisconsin-Madison\\
		Madison, WI 53706-1325\\
	USA}
\email{\href{mailto:iscott6@wisc.edu}{iscott6@wisc.edu}}
\urladdr{\url{https://people.math.wisc.edu/~iscott6/}}
	
\thanks{The first author's research was partially supported by Marsden Grant
20-VUW-105 The second author's research was partially supported by AMS-Simons
Foundation Collaboration Grant 626304.}

\subjclass[2020]{Primary: 03D30; Secondary: 20F10}
	
\keywords{Ziegler reducibility, initial segments, exact pair, degree of theory}
	
\begin{abstract}
The Ziegler degrees were introduced to characterize definability in group
theory. In~\cite{JLSta}, the authors show that the first order-theory of
the Ziegler degrees (as a partial order) is undecidable. We improve this
result, showing that the theory of the Ziegler degrees is bi-interpretable
with true second-order arithmetic.
\end{abstract}

\maketitle
	
\section{Introduction}

Ziegler reducibility arose in Martin Ziegler's
investigation into the structure of existentially closed groups~\cite{Zi80}.
In this landmark study, he showed that Ziegler reducibility between word
problems of finitely generated groups characterizes quantifier-free parameter
definability, which in turn characterizes relative omitting types in
existentially closed groups. In addition, recent work shows that Ziegler
reducibility also characterizes a notion of definability in
subshifts~\cite{NCS26}.

The restriction of Ziegler reducibility to c.e.\ sets had appeared in its own
right earlier under the name of \emph{quasireducibility} or
\emph{Q-reducibility}. Rogers attributes the definition of Q-reducibility to
Tennenbaum (see \cite{Ro67}, p.~159), who studied it in the context of Post's
problem. Belegradek formulated the analogue of Ziegler's theorem for finitely
presented groups, which is in terms of Q-reducibility, in \cite{Be74}. The
degree theory of Q-reducibility has attracted renewed attention in recent
years, for example in \cite{BDS26,KNta}.
\smallskip

We postpone a formal definition of Ziegler reducibility until
Definition~\ref{def:Z-red}, but we describe the motivation here. Ziegler
called the reducibility ``star-reducibility'' or~``$\leq^*$''
(\cite[Def.~III.1.1]{Zi80}). Here we denote it as~$\leq_Z$ in analogy with
standard notation for reducibilities and in recognition of growing interest
around it. The definition is unusual in that it is a ``total reducibility''
--
in the sense that $A \leq_Z B$ implies that both the positive ($a \in A$) and
the negative ($a \notin A$) information about~$A$ are ``computed'' from
positive and negative information about the oracle~$B$ -- but the
reducibility is not symmetric in the positive and negative information. The
positive information $a \in A$ is determined by finitely many positive bits
of~$B$ (via \emph{enumeration reducibility}, which will also be defined in
Definition~\ref{def:Z-red}). On the other hand, the negative information $a
\notin A$ is determined by finitely many positive bits and a \emph{single
negative bit}. This reflects the syntactic provability relation for atomic
sentences in group theory over some parameter set: Let~$U$ and~$V$ be sets of
words, and $T = \{ u(\tuple g) = 1, v(\tuple g) \neq 1 \mid u \in U, v \in V
\}$ be a set of atomic sentences in the letters~$\tuple g$. Let $T^+ = \{
u(\tuple g) = 1 \mid u \in U \}$ be the \emph{positive part} of~$T$ and $T^-
= \{ v(\tuple g) \neq 1 \mid v \in V \}$ be the \emph{negative part} of~$T$.
Then for any word $w(\tuple g)$, we have
\begin{align*}
T \proves w(\tuple g) = 1 &\iff T^+ \proves w(\tuple g) = 1\\
   	&\iff\ \text{there is a finite subset}\ T^+_0 \subseteq T^+\
    \text{with}\ T^+_0 \proves w(\tuple g) = 1.
\end{align*}
On the other hand, for any word $w(\tuple g)$, we have
\begin{align*}
T \proves w(\tuple g) \neq 1 &\iff\ \text{for some}\ v(\tuple g)
    \neq 1 \in T^{-},\; T^+ \cup \{ v(\tuple g) \neq 1\} \proves w(\tuple g)
    \neq 1\\
	&\iff \text{there is a finite}\ T_0^+ \subseteq T^+\ \text{and}\
    v(\tuple g) \neq 1 \in T^-\ \text{with}\\
	&\qquad\qquad T_0^+ \cup \{v(\tuple g) \neq 1\}
    \proves w(\tuple g) \neq 1.
\end{align*}


The structure of Ziegler reducibility parallels this relationship.
\smallskip



In this paper, we investigate the complexity of the first-order theory of the
Ziegler degrees. This was undertaken for the Turing degrees in classical work
of Simpson, in which he showed that the theory is computably isomorphic to
true second-order arithmetic~\cite{Si77}. Since the poset of the Turing
degrees is interpretable in second-order arithmetic, this is a priori as
complicated as possible. The first-order theory of the enumeration degrees is
also recursively isomorphic to true second-order arithmetic, which again is
as complicated as possible~\cite{SW97}. In~\cite{JLSta}, the authors show
that the first-order theory of the Ziegler degrees, is undecidable -- indeed,
they show this undecidability already holds for the
$\forall\exists\forall$-theory. Write~$\cD_Z$ for the poset of Ziegler
degrees. In this paper, we strengthen this result by showing that the
first-order theory of~$\cD_Z$ is recursively isomorphic to true second-order
arithmetic.

Following the framework of Nerode and Shore \cite{NS80}, we prove two
theorems about the algebraic structure of the poset of the Ziegler degrees;
namely, that every countable ideal has an exact pair and that every countable
distributive lattice embeds as an initial segment.
	
\begin{theorem}\label{thm:expair}
Every countable ideal~$\cI$ in~$\cD_Z$ has an exact pair; i.e., a pair of
degrees $\deg(A)$ and $\deg(B)$ such that $\deg(C) \in \cI$ iff $C \leq_Z A$
and $C \leq_Z B$.
\end{theorem}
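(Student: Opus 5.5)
We need a definition of Ziegler reducibility, which the paper defers to Definition~\ref{def:Z-red}. The working version is as follows: $A \leq_Z B$ iff there are c.e.\ sets $W, V$ such that
\begin{itemize}
\item $x\in A$ iff some finite $D\subseteq B$ has $\langle x,D\rangle\in W$, and
\item $x\notin A$ iff some finite $D\subseteq B$ and some $y\notin B$ have $\langle x,D,y\rangle\in V$.
\end{itemize}

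Let $\cI$ be generated by $C_0\leq_Z C_1\leq_Z\cdots$ (countable ideals are countably generated; if $\cI$ has a top element the result is trivial). My plan follows the classical Spector/Kleene--Post exact pair construction. First I would pass to columns: put $\hat C_i = C_0\oplus\cdots\oplus C_i$ and place $\hat C_i$ on the $i$-th column of both $A$ and $B$, so that
\[
A^{[i]} = B^{[i]} = \hat C_i .
\]
The remaining locations are filled by a forcing construction with finite conditions.

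The first thing to verify is $\cI \subseteq \{C : C\leq_Z A,\ C\leq_Z B\}$. Uniformly, $C_i\leq_Z A$ via the column: positive information is read off directly from $A^{[i]}$. Negative information $x\notin C_i$ uses the single negative bit "$\langle i,x\rangle\notin A$". One point to check is that the column coding must be uniform and exact. So the columns should contain exactly $\hat C_i$, with no extra filler on coded columns. Filler should be placed only in designated non-column positions, for instance a separate region $A^{[\omega]}$ encoded as extra rows used only for diagonalization.

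The heart of the proof is the converse: if $D\leq_Z A$ via $(W_e,V_e)$ and $D\leq_Z B$ via $(W_j,V_j)$, then $D\leq_Z \hat C_n$ for some $n$. The construction builds $A,B$ by stages meeting requirements $R_{e,j}$. At stage $s=\langle e,j\rangle$, we have finite approximations that fix membership outside the columns for finitely many locations. In addition, we commit that from now on the columns $\ge s$ contain only finitely many new elements determined by $\hat C_k$ for $k\geq s$.

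We then ask whether there exist an $x$ and extensions $A'\supseteq$ current and $B'$ of the two conditions that force disagreement between the two reductions. Here "forcing disagreement" means: $x$ is put in by $W_e$ relative to $A'$ but out by $V_j$ relative to $B'$, or vice versa. Both of these are witnessed by finite positive information plus one negative bit, so they can be preserved by finite extension. If such a disagreement exists, take it; then the pair $(e,j)$ does not define a common $D$.

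If no such disagreement exists, I want to show $D\leq_Z \hat C_s$. The idea is to search over extensions compatible with the columns below $s$:
\begin{itemize}
\item For $x\in D$, enumerate $x$ whenever some finite extension consistent with $\hat C_{s}$'s positive part on the low columns yields $W_e$-acceptance. This gives positive information from finite subsets of $\hat C_s$.
\item For $x\notin D$, use $V_e$ acceptance. Its one negative bit must lie either in a low column, giving a negative bit of $\hat C_s$, or in a part we control.
\end{itemize}
The point of using two sets $A,B$ is that a negative bit in a high column of $A$ can be matched against $B$'s computation, where that column is also high. Alternatively, the negative query can be handled by the non-disagreement hypothesis, which makes the answers consistent across all extensions.

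This final case is where I expect the main obstacle, because it is not symmetric in positive and negative information. The single negative bit may query a position $\langle k,y\rangle$ with $k\ge s$, and that bit depends on $\hat C_k$, which lies outside $\hat C_s$'s reach. The first fix I would try is to arrange the coding so that high columns are coded sparsely and monotonically: $\hat C_k$ is placed so that membership of $\langle k,y\rangle$ is determined positively by $\hat C_k$, but its non-membership can always be "forced" by an extension. Then any negative bit in a high column can be replaced by an equivalent positive search. Failing that, I would let the coding location of $\hat C_k$ be chosen at stage $k$, after all requirements $R_{e,j}$ with $\langle e,j\rangle<k$ have acted, so that those requirements' negative queries into high columns are already frozen by the finite condition.
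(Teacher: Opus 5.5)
Your construction cannot succeed as designed, and the obstacle you flag at the end is fatal rather than technical. You put $\hat C_i$ into column $i$ of \emph{both} $A$ and $B$, exactly and uniformly, with filler confined to a separately designated region. Then the set $E=\bigoplus_i \hat C_i$ is $m$-reducible, hence Ziegler reducible, to both $A$ and $B$: just read off the columns. But $C_n\le_Z E$ for every $n$, so $E\in\cI$ would give $\cI$ a greatest element, which you excluded. Hence $\{A,B\}$ is never an exact pair, however the filler is chosen.

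In terms of your requirements, take the pair of indices for these two column-reading reductions. They only consult the columns, which are fixed and identical in $A$ and $B$, so no disagreement can ever be forced, yet $E\not\le_Z\hat C_s$ for every $s$. Neither of your fixes can help, because the contents of columns $k\ge s$ are committed in advance and lie outside the construction's control. Even the positive half of your intended reduction to $\hat C_s$ breaks, since your extensions may not vary those columns. Note also that uniformity is unnecessary: $C_i\le_Z B$ is only needed for each $i$ separately, so $B^{[i]}=^*\hat C_i$ suffices, with the finitely many errors hard-coded into the reduction.

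The missing idea is to code non-uniformly on at least one side and to let the diagonalization write into the not-yet-coded columns. The paper fixes $A=\bigoplus_n A_n$ exactly (its $A_n$ are your $C_n$) and builds only $B$. At stage $s+1$, $B_s$ is defined only on $\omega^{[<s]}$ plus finitely many points. The construction looks for a finite partial function $\tau$ with $\dom(\tau)\cap\dom(B_s)=\emptyset$, which is free to touch columns $\ge s$, such that $(\Phi_j,\Psi_k)[B_s\cup\tau](x)$ converges to a value different from $(\Phi_e,\Psi_i)[A](x)$. Only afterwards does it fill the rest of column $s$ with $A_s$, so that $B^{[s]}=^*A_s$.

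If no such $\tau$ exists, every convergent computation from any $B_s\cup\tau$ is correct. So $X$ can be enumerated, and $\overline X$ enumerated with a single negative bit, by searching over all such $\tau$. Any bit outside $\dom(B_s)$, whether positive or the one negative bit, is supplied by the choice of $\tau$. Only bits in the computable set $\dom(B_s)$ are queried from the oracle. Since $B_s$ is a finite variant of $A_0\oplus\cdots\oplus A_{s-1}$, this gives $X\le_Z A_s$.

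Your second fallback, choosing where $\hat C_k$ goes only after earlier requirements have acted, gestures in this direction. However, it contradicts your insistence on exact uniform coding, and the mechanism is the opposite of freezing high-column queries: the high columns must be left open so that the search answers those queries itself.
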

	
\begin{theorem}\label{thm:ctdl}
Every countable distributive lattice embeds as an initial segment in~$\cD_Z$.
\end{theorem}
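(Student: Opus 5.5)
The plan is to follow the Lachlan--Lerman method for initial segments of the Turing degrees and adapt it to Ziegler reducibility. First I fix a countable distributive lattice $\cL$, or more generally a countable usl with least element, and a uniform sequential usl table $\Theta$ representing it (Lerman's lattice tables). For $a \in \cL$ and a function $f$ on $\omega$, write $f^{[a]}$ for the function obtained by applying the equivalence relation $\equiv_a$ of $\Theta$ to the values of $f$. I will build a generic $f \colon \omega \to \omega$ by forcing with perfect $\Theta$-trees. The degree of $a$ will be the Ziegler degree of the \emph{graph-like} set $G_a = \{\sigma : \sigma \subset f^{[a]}\}$.

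The first step is to record why graph-like sets are convenient. For $\sigma$ of length $n$, we have $\sigma \notin G_a$ iff some $\tau \neq \sigma$ of length $n$ lies in $G_a$. So the negative information about $G_a$ is positive information, and between graph-like sets Ziegler, enumeration and Turing reducibility coincide. Together with the usual table computations, this gives:
\begin{itemize}
\item $a \le b$ implies $G_a \le_Z G_b$;
\item joins are preserved, because $G_{a \vee b} \equiv_Z G_a \oplus G_b$;
\item $a \not\le b$ implies $G_a \not\le_Z G_b$, which follows from the Turing case by forcing with $\Theta$-splitting trees.
\end{itemize}
Hence $a \mapsto \deg_Z(G_a)$ is an embedding of $\cL$.

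The second step is to show that an arbitrary $C \le_Z G_1$, with $1$ the top of $\cL$, is Turing-equivalent to some $f^{[a]}$. Since $G_1$ has its negative information given positively, $C \le_Z G_1$ makes both $C$ and $\overline{C}$ enumeration reducible to $G_1$. So $C = \Phi^{f}$ for a total Turing functional $\Phi$. The standard Lachlan--Lerman argument then finds a tree $T$ on the generic path and an $a \in \cL$ such that $\Phi$-splittings on $T$ are exactly $\equiv_a$-splittings. This gives $C \equiv_T f^{[a]}$, and hence $C \le_Z G_a$.

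The main obstacle is the reverse reduction $G_a \le_Z C$. The set $C$ need not be graph-like, so Turing reducibility from $C$ need not yield a Ziegler reduction: a Turing computation may query many negative bits of $C$, while a Ziegler reduction allows only one. I would handle this inside the construction by strengthening the splitting requirement. When refining to the $\Phi$-splitting subtree, I choose $T$ so that any two $\equiv_a$-inequivalent branches $\sigma, \tau$ through a splitting node are separated by a single number $x$ with $x \in \Phi^{\sigma}$ and $x \notin \Phi^{\tau}$. Then, given $C$, the correct branch above a node is determined by finitely many positive facts about $C$ together with one negative fact $x \notin C$. Positive facts about which $\sigma \subset f^{[a]}$ hold can be enumerated from positive information alone, using the positive witnesses $x \in C$. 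The negative information $\sigma \notin G_a$ is again positive (some incompatible string of the same length is in $G_a$). So the reduction uses negative information only when ruling out one branch, and a single $x \notin C$ suffices.

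The delicate point, which I expect to be the core of the work, is to check the following. Given totality of $\Phi$ on the tree, density of such \emph{one-point} splittings can always be arranged, or failing that one can pass to a subtree on which $\Phi$ is constant modulo a smaller lattice element. This requires a combinatorial lemma about Ziegler splittings in $\Theta$-trees, proved by the usual homogeneity or pruning argument on the finite tables. Once this is done, every Ziegler degree below $\deg_Z(G_1)$ is some $\deg_Z(G_a)$. Combined with the first step, this shows that $\cL$ is embedded onto the initial segment below $\deg_Z(G_1)$, which proves Theorem~\ref{thm:ctdl}.
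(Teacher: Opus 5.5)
\paragraph{There is a genuine gap, and it comes from your choice of representatives, not from the splitting combinatorics you flag as delicate.}
Because $\overline{G_a}\le_e G_a$, every set $C\le_T f$ satisfies $C\le_Z G_1$. Conversely, $C\equiv_Z G_a$ forces $\overline{C}\le_e^1 G_a$, and this collapses to $\overline{C}\le_e G_a\le_e C$. So every set Ziegler-equivalent to some $G_a$ must satisfy $\overline C\le_e C$, and sets below $f$ violating this always exist.

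Concretely, let $L$ be the two-element lattice. Take a set $Y\equiv_T f$, put $r=\sum_{n\in Y}2^{-n-1}$ (irrational, since $Y$ is noncomputable), and let $C=\{q\in\mathbb{Q}: q<r\}$. Suppose $B\le_e C$ via $\Phi$ and $B\le_e\overline C$ via $\Phi'$. Then $x\in B$ iff there are $\langle x,D\rangle\in\Phi$ and $\langle x,E\rangle\in\Phi'$ with $\max D<\min E$, so $B$ is c.e. Hence $C\equiv_Z G_1$ would make $\overline C$ c.e., and $\overline C\equiv_Z G_1$ would make $C$ c.e. Both cannot hold, since $C\equiv_T f$ is noncomputable. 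So one of $C,\overline C$ lies strictly between $\mathbf 0$ and $\deg_Z(G_1)$.

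In other words, no nonzero Ziegler degree containing a set $G$ with $\overline G\le_e G$ is minimal. The initial segment below $\deg_Z(G_1)$ contains the Ziegler degrees of all sets Turing below $f$, and no refinement of the trees can cut it down to $L$.

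\paragraph{The step that fails.}
You claim that positive facts $\sigma\subset f^{[a]}$ can be enumerated from positive information about $C$. At a one-point splitting with $x\in\Phi^\sigma\setminus\Phi^\tau$, the $\tau$-side is certified only by $x\notin C$. But the reduction $G_a\le_e C$ may use no negative information at all; the single negative bit is available only for $\overline{G_a}$.

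\paragraph{What the paper does instead.}
It represents $\mathbf f_a$ by the $0$--$1$ path $f_a$ itself, a set rather than a graph. It uses Lachlan's representation $\eta:L'\to\cP([0,n))$ and very uniform f-trees whose branchings are monotone ($\tau^l_0(x)\le\tau^l_1(x)$), so the maps $\Theta_{a,b}$ yield $m$-reductions $f_b\le_m f_a$. In Proposition~\ref{prop4}, every branching of $\cT^*(1_L)$ is made an $\str{e,i}$-splitting at a single argument $y$, giving $f_{a_0}(x')=f_1(x)=(\Phi_e,\Psi_i)[f_1](y)$.

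Because the bits of $f_{a_0}$ are the branch choices themselves, the one-sided splitting suffices. Membership $x'\in f_{a_0}$ is witnessed positively by $y\in C$, and $x'\notin f_{a_0}$ by the single negative fact $y\notin C$.
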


The main theorem follows from these results via the work of~\cite{NS80}.
	
\begin{theorem}[Main Theorem]\label{thm:secarith}
$Th(\cD_Z)$ is bi-interpretable with true second-order arithmetic.		
\end{theorem}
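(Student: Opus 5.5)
We derive Theorem~\ref{thm:secarith} from Theorems~\ref{thm:expair} and~\ref{thm:ctdl} through the Nerode--Shore coding method of~\cite{NS80}, as Simpson~\cite{Si77} and Slaman--Woodin~\cite{SW97} did for the Turing and enumeration degrees. There are two directions. In one we interpret $\cD_Z$ in second-order arithmetic. In the other we interpret true second-order arithmetic, uniformly and with parameters, in $\cD_Z$, and then eliminate the parameters.

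\textbf{Interpreting $\cD_Z$ in arithmetic.} Ziegler reducibility $C \leq_Z A$ is an arithmetical relation between reals: it asks for the existence of a c.e.\ operator of the kind given in Definition~\ref{def:Z-red}, which is a $\Sigma^0_3$-type condition. So $\cD_Z$ is interpretable in second-order arithmetic, with domain the reals modulo the arithmetical equivalence $\equiv_Z$ and order induced by $\leq_Z$. This gives a computable translation of $Th(\cD_Z)$ into $Th(\mathbb{Z}_2)$.

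\textbf{Interpreting arithmetic in $\cD_Z$.} Fix a countable distributive lattice that codes a copy of $(\omega,+,\times)$ in a first-order definable way, for instance the Nerode--Shore/Simpson lattices built from finite partitions. By Theorem~\ref{thm:ctdl}, every such model, and indeed every countable relational structure coded by such a lattice, appears as an initial segment $[\mathbf 0,\mathbf d]$ of $\cD_Z$. This lets us code any countable model of arithmetic with finitely many degree parameters, with its domain, order and operations defined by first-order formulas in those parameters. To quantify over sets of numbers we use exact pairs. Fix a parameter-coded standard model $\cN$ whose numbers are degrees $\mathbf n_i$. For any set $X\subseteq\omega$, the countable ideal generated by $\{\mathbf n_i : i\in X\}$ together with the other required elements has an exact pair by Theorem~\ref{thm:expair}. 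Membership $i\in X$ is then expressed as $\mathbf n_i \le \mathbf a$ and $\mathbf n_i \le \mathbf b$. We must arrange that the coding degrees form an antichain, or are otherwise independent, so that the ideal does not accidentally contain unwanted $\mathbf n_j$. Next we need to say that a coded model is \emph{standard} in first-order terms. We do this as usual: a coded model is standard iff every coded subset (via an exact pair) containing $0$ and closed under successor contains everything. Standardness is therefore definable. Hence every second-order sentence $\varphi$ translates to the sentence ``for all parameters coding a standard model, $\varphi$ holds with set quantifiers ranging over exact pairs.'' This yields $Th(\mathbb{Z}_2)\le_1 Th(\cD_Z)$. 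Combined with the first direction, the Myhill isomorphism theorem gives a recursive isomorphism.

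\textbf{Bi-interpretability.} For full bi-interpretability, not just computable isomorphism of theories, we must also define in $\cD_Z$ the map sending a degree to a code for a real in a coded standard model, and check that the composite interpretations are definable. A degree $\mathbf x$ is coded by the set of $n$ in the standard model that satisfy some fixed arithmetical description of a representative of $\mathbf x$; we would express this relation using exact pairs over an ideal generated from $\mathbf x$, as in Slaman--Woodin. I expect this step to be the main obstacle. It needs a definable, parameter-invariant way to relate an arbitrary degree to a coded real, and Theorem~\ref{thm:ctdl} alone only controls initial segments. If this cannot be achieved, I would settle for the recursive isomorphism of theories, which is what~\cite{NS80} directly yields. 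The remaining routine step is verifying that the lattice-coding of~\cite{NS80} transfers verbatim, since it uses only the poset structure together with Theorems~\ref{thm:expair} and~\ref{thm:ctdl}.
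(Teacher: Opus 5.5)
Your proposal is essentially the paper's proof. The paper calls the $1$-reduction of $Th(\cD_Z)$ to true second-order arithmetic clear, and gets the converse by citing Nerode--Shore's Theorem~3.2, which is stated for an arbitrary partial order; its three hypotheses are exactly Proposition~\ref{prop:usl} (which you use only implicitly when forming ideals and intervals $[\mathbf 0,\mathbf d]$), Theorem~\ref{thm:expair} and Theorem~\ref{thm:ctdl}, so your re-sketch of the coding and the ``verbatim transfer'' check are not needed. The paper proves only this mutual $1$-reducibility (its introduction states the result as a recursive isomorphism of theories), so your fallback is exactly what is established, and the structure-level bi-interpretability you flag as the main obstacle is not required; for $\cD_T$ that stronger statement is the Slaman--Woodin bi-interpretability conjecture.
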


The proof of Theorem~\ref{thm:expair} is essentially identical to the
argument of the same result in the Turing degrees, in~\cite{Sp56}, and is
presented in Section~\ref{sec:expair}. The proof of Theorem~\ref{thm:ctdl},
given in Section~\ref{sec:latt}, occupies the majority of the paper. Our
proof follows the outline of \cite{La68}, which first proved the
corresponding result in the Turing degrees. However, we make several
modifications to guarantee that the reductions are Ziegler reductions. We
start with the relevant definitions in Section~\ref{sec:notation}.
	


\section{Notation}\label{sec:notation}

Let~$D_u$ be the $u$th canonical finite set, and~$(W_e)_e$ a computable
enumeration of the c.e.\ sets.
	
\begin{defn}\label{def:Z-red}
Let $A, B \subseteq \omega$. Then:
\begin{enumerate}
\item\label{item:pos-red}
$A$ is \emph{enumeration reducible} to~$B$, written $A \leq_eB$, if there
is a c.e.\ set~$W_e$ such that
\[
a \in A \iff \exists \langle a,u \rangle \in W_e\; (D_u \subseteq B).
\]
In this case, we write $A = \Phi_e[B]$.
\item\label{item:neg-red}
$A \leq_e^1 B$ if there is a c.e.\ set~$W_i$ such that
\[
a \in A \iff \exists \langle a, u, v \rangle \in W_i\; (D_u \subseteq B
  \land D_v \cap B = \emptyset \land |D_v| \leq 1).
\]
In this case, we write $A = \Psi_i[B]$.
\item\label{item:Z-red}
$A$ is \emph{Ziegler reducible} to~$B$, written $A \leq_Z B$, if
\begin{enumerate}
\item $A \leq_e B$, and
\item$\overline{A} \leq_e^1 B$.
\end{enumerate}

If $A = \Phi_e[B]$ and $\overline{A} = \Psi_i[B]$, then we write $A =
(\Phi_e, \Psi_i)[B]$.
\end{enumerate}
\end{defn}

Compare the preceding definition to the motivation for Ziegler reducibility
in the introduction:~\eqref{item:pos-red} captures that positive information
about~$A$ is determined by positive information about~$B$;
and~\eqref{item:neg-red} (applied to~$\overline A$) captures that negative
information about~$A$ is determined by positive information and ``one bit of
negative information at a time'' about~$B$.

It is not hard to check that~$\leq_Z$ is reflexive and transitive and hence
induces an equivalence relation on~$2^\omega$, which we denote by~$\equiv_Z$.
We write~$\cD_Z$ for the induced partial order $2^\omega / \equiv_Z$.

Notice that Ziegler reducibility is a common refinement of both enumeration-
and Turing-reducibility, and is in turn refined by $m$-reducibility. It is
not hard to show, although we do not do it here, that all of these
refinements are strict.

	
\begin{prop}\label{prop:usl}
$\cD_Z$ is an upper semilattice with joins given by $A \oplus B = \{2a \mid a
\in A\} \cup \{2b+1 \mid b \in B\}$ and least element~$\mathbf{0}$.
Furthermore,~$\mathbf{0}$ consists exactly of the computable sets.
\end{prop}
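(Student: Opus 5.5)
To prove Proposition~\ref{prop:usl}, I would establish three facts in turn: $A\oplus B$ is an upper bound of $A$ and $B$, it is the least upper bound, and the degree $\mathbf{0}$ of the computable sets lies below every degree and contains exactly the computable sets. Since $\leq_Z$ is reflexive and transitive, it suffices to work with representatives. Well-definedness of the join on degrees then follows from the least-upper-bound property.

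\emph{Upper bound.} For $A\leq_Z A\oplus B$, I take $W_e=\{\langle a,u\rangle : D_u=\{2a\}\}$, which gives $A=\Phi_e[A\oplus B]$. For the complement, I take $W_i=\{\langle a,u,v\rangle : D_u=\emptyset,\ D_v=\{2a\}\}$. Then $a\notin A$ iff $2a\notin A\oplus B$, and this uses a single negative bit, as allowed. The case of $B$ is symmetric, using $2b+1$.

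\emph{Least upper bound.} Suppose $A=(\Phi_{e_0},\Psi_{i_0})[C]$ and $B=(\Phi_{e_1},\Psi_{i_1})[C]$. To get $A\oplus B\leq_e C$, I enumerate $\langle 2a,u\rangle$ for each $\langle a,u\rangle\in W_{e_0}$ and $\langle 2b+1,u\rangle$ for each $\langle b,u\rangle\in W_{e_1}$. For the complement, $\overline{A\oplus B}=\{2a: a\notin A\}\cup\{2b+1: b\notin B\}$, so I enumerate $\langle 2a,u,v\rangle$ for $\langle a,u,v\rangle\in W_{i_0}$ and $\langle 2b+1,u,v\rangle$ for $\langle b,u,v\rangle\in W_{i_1}$. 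The condition $|D_v|\le1$ is preserved because each axiom is copied unchanged apart from the relabeled output. Hence $A\oplus B\leq_Z C$.

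\emph{Least element.} If $A$ is computable, then for any $C$ we can take $W_e=\{\langle a,\emptyset\rangle: a\in A\}$ and $W_i=\{\langle a,\emptyset,\emptyset\rangle: a\notin A\}$; both are c.e.\ since $A$ is computable, so $A\leq_Z C$. Conversely, suppose $A\leq_Z\emptyset$. Then $A\leq_e\emptyset$, so $A$ is c.e. Also $\overline A\leq_e^1\emptyset$: the conditions $D_u\subseteq\emptyset$ and $D_v\cap\emptyset=\emptyset$ just say $D_u=\emptyset$, so $\overline A=\{a:\exists\langle a,\emptyset,v\rangle\in W_i\}$ is c.e. By Post's theorem $A$ is computable. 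Therefore $\mathbf 0$, the degree of $\emptyset$, is exactly the class of computable sets and lies below all degrees. No step here is a real obstacle; the only point needing care is checking that relabeling outputs in the join argument respects the one-negative-bit restriction, which it does.
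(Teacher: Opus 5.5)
Your argument is correct. The paper states Proposition~\ref{prop:usl} without proof, and you supply the routine verification it leaves to the reader: relabeling the outputs of the axioms handles the join while preserving $|D_v|\le 1$, and $A\le_Z\emptyset$ forces both $A$ and $\overline A$ to be c.e.

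There is one cosmetic slip. Your final description of $\overline A$ drops the restriction $|D_v|\le 1$, which does not disappear when the oracle is $\emptyset$. It should read $\{a : \exists\langle a,u,v\rangle\in W_i\,(D_u=\emptyset \land |D_v|\le 1)\}$. That set is still c.e., so the conclusion stands.
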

	
	
\section{\texorpdfstring{The degree of $\cD_Z$}{The degree of DZ}}%
\label{sec:degD*}


	
	
	

We recall the main theorem of this paper:

\begin{nonamethm}[Theorem~\ref{thm:secarith}]
$Th(\cD_Z)$ is bi-interpretable with true second-order arithmetic.
\end{nonamethm}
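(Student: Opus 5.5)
The plan is to follow the Nerode--Shore framework \cite{NS80}, taking Theorems~\ref{thm:expair} and~\ref{thm:ctdl} as given.

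\emph{Upper bound.} First I interpret $Th(\cD_Z)$ in second-order arithmetic. A Ziegler reduction $(\Phi_e,\Psi_i)$ is coded by a pair of natural numbers. The relation $A \leq_Z B$ is arithmetical in $A\oplus B$: it says there exist $e,i$ such that for all $a$, $a\in A$ iff some $\langle a,u\rangle\in W_e$ has $D_u\subseteq B$, and $a\notin A$ iff some $\langle a,u,v\rangle\in W_i$ has $D_u\subseteq B$, $D_v\cap B=\emptyset$ and $|D_v|\le 1$. So $\cD_Z$ is the quotient of the set variables by an arithmetical equivalence relation, ordered by an arithmetical relation. Each sentence about $\cD_Z$ therefore translates uniformly and computably into a second-order arithmetic sentence, with quantifiers over degrees becoming quantifiers over sets. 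This gives a computable reduction of $Th(\cD_Z)$ to true second-order arithmetic.

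\emph{Lower bound.} Next I interpret second-order arithmetic in $\cD_Z$ with parameters, uniformly in the parameters.
\begin{enumerate}
\item Using Theorem~\ref{thm:ctdl}, embed as an initial segment a suitable countable distributive lattice, such as the one used by Nerode--Shore or Shore, which codes an arbitrary countable relation. Such segments are finite-parameter definable, so one can define, from finitely many parameters, a copy of $(\omega,+,\times)$ whose elements are degrees such as the atoms or minimal elements of a coded lattice.
\item Subsets of this model are coded using Theorem~\ref{thm:expair}. Any countable set $S$ of the number-degrees generates an ideal, and an exact pair $\mathbf a,\mathbf b$ for it recovers $S$ as the set of number-degrees below both $\mathbf a$ and $\mathbf b$. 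This needs the number-degrees to form an antichain of pairwise incomparable, ``independent'' degrees, so that the ideal generated by $S$ contains no other number-degree. I will arrange this in the lattice of step~(1), for instance by using atoms of a Boolean-algebra-like segment.
\item A first-order formula can say that the parameters code a standard model of arithmetic. This follows Nerode--Shore: ``every subset coded by an exact pair that contains $0$ and is closed under successor is everything.'' Every standard model is realized, and second-order quantifiers range over all subsets because every countable ideal has an exact pair. Quantifying over the parameters then yields a computable translation of second-order sentences into $\cD_Z$-sentences.
\end{enumerate}

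\emph{Bi-interpretability.} For full bi-interpretability rather than recursive isomorphism of the theories, I need more. I must show that each degree is definable from its code in the arithmetic model and conversely. One route is to code a representative set $X$ of a degree $\mathbf x$ via an exact pair over the arithmetic model, and to define ``$\mathbf x$ is the degree of the set coded'' through the $\leq_Z$ relation, which is itself arithmetical in codes.

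The main obstacle is this last step. Nerode--Shore give interpretability in both directions, but defining the map from degrees to their codes in $\cD_Z$ usually needs rigidity or automorphism-type information, as in Slaman--Woodin for the Turing degrees, which is not available here. I would first settle for the claim that the theories are recursively isomorphic, since that is what the two-sided interpretation gives. I would then attempt to carry out the coding of arbitrary degrees relative to a parameter, checking that the exact-pair coding respects $\leq_Z$.
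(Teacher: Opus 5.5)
Your proposal matches the paper's proof. The paper gives the same arithmetical upper bound and then simply invokes Nerode--Shore (Theorem~\ref{thm:NS}), with hypotheses supplied by Proposition~\ref{prop:usl} (which you use implicitly when forming the ideal generated by a set of degrees), Theorem~\ref{thm:expair}, and Theorem~\ref{thm:ctdl}; your coding sketch just unpacks that citation. The paper's argument likewise yields only mutual $1$-reducibility, i.e.\ recursive isomorphism of the theories, which is the sense of ``bi-interpretable'' intended here (cf.\ the introduction), so the degree-by-degree bi-interpretability you flag as an obstacle is neither claimed nor needed.
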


Our proof of Theorem~\ref{thm:secarith} is based on the following classical
result of Nerode and Shore.
	
\begin{theorem}[{Nerode/Shore \cite[Theorem~3.2 and Remark~3.3]{NS80}}]%
\label{thm:NS}
If~$\cP$ is a partial order satisfying the following three conditions:
\begin{align}
&\text{$\cP$ is an upper semilattice with least element~0},\label{3.1}\\
&\text{Every countable ideal in~$\cP$ has an exact pair, and}\label{3.2}\\
\begin{split}
&\text{Every countable distributive lattice~$\cL$ with a least element}\\
&\qquad\qquad\text{ is isomorphic to some initial segment of~$\cP$,}
\end{split}\label{3.3}
\end{align}
then true full second-order arithmetic is $1$-reducible to the first-order
theory of~$\cP$.
\end{theorem}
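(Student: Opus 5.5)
This result is quoted from Nerode and Shore, so in the paper it would be invoked rather than reproved. If I were to reprove it, I would follow their coding strategy: use \eqref{3.3} to realize arbitrary countable structures inside initial segments, and \eqref{3.2} to quantify over arbitrary countable sets of degrees with finitely many degree parameters. Together these give a translation, uniform and computable in the sentence, of second-order arithmetic sentences $\varphi$ into first-order sentences $\varphi^*$ of $\cP$ with $\mathbb{N}\models\varphi \iff \cP\models\varphi^*$. The map $\varphi\mapsto\varphi^*$ is injective (for instance by padding), hence it is a $1$-reduction.

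\textbf{Coding models of arithmetic.} For any countable model $\mathcal{M}=(M,+,\times,<)$ of a finite fragment of arithmetic, I would build a countable distributive lattice $\cL_{\mathcal{M}}$ with least element. Its join-irreducible elements include an antichain $\{a_m : m\in M\}$ together with auxiliary atoms or gadgets $c_{m,n,k}$ witnessing $m+n=k$ and $m\cdot n=k$ through lattice relations. By \eqref{3.3}, $\cL_{\mathcal{M}}$ is isomorphic to an initial segment of $\cP$. The domain $M$ must then be made first-order definable from finitely many parameters. The set $\{a_m\}$ is the set of minimal elements below a top parameter $\mathbf{g}$ satisfying a specified finite lattice pattern relative to further parameters. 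Because the segment is an \emph{initial} segment, quantifying over degrees $\le\mathbf{g}$ stays inside the lattice. Addition and multiplication become first-order in the parameters. This gives a formula $\mathrm{Mod}(\bar p)$ expressing that $\bar p$ codes a model of the chosen finite fragment (e.g.\ $\mathrm{PA}^-$ plus discreteness), together with formulas interpreting $+$, $\times$, $<$ in it.

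\textbf{Second-order quantification and standardness.} A subset $X\subseteq M$ is coded as follows. Let $\cI_X$ be the ideal generated by $\{a_m:m\in X\}$, which is countable because $\cP$ is an upper semilattice by \eqref{3.1}. By \eqref{3.2}, $\cI_X$ has an exact pair $\mathbf{x},\mathbf{y}$. The antichain property of the $a_m$ then gives
\[
m\in X \iff a_m\le \mathbf{x}\ \text{and}\ a_m\le\mathbf{y}.
\]
Conversely, every pair $(\mathbf{x},\mathbf{y})$ defines some subset of $M$ in this way. Second-order quantifiers over $\mathcal{P}(M)$ therefore become first-order quantifiers over pairs of degrees. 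Standardness of the coded model is now first-order: it is the second-order induction axiom, quantifying over all pairs, so $\mathrm{Std}(\bar p)$ is expressible. The translation is then
\[
\varphi^* := \exists \bar p\,(\mathrm{Mod}(\bar p)\wedge\mathrm{Std}(\bar p)\wedge \varphi^{\bar p}),
\]
where first-order quantifiers range over the $a_m$ and set quantifiers range over pairs. Some standard model is coded, because $\cL_{\mathbb N}$ embeds by \eqref{3.3}. Any two standard coded models are isomorphic, and the full power set is available, so $\varphi^*$ holds iff $\varphi$ is true.

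\textbf{Main obstacle.} The delicate step is designing $\cL_{\mathcal{M}}$ and the parameters so that both the domain $\{a_m\}$ and the graphs of $+,\times$ are definable by a \emph{fixed} formula uniformly across all models. The coding must also not be spoiled by degrees outside the initial segment; this is where the hypothesis that $\cL$ is isomorphic to an initial segment, rather than merely embedded, is used. I would try to follow Nerode and Shore's gadget: represent each relation instance by a join-irreducible lying below exactly the appropriate joins of the $a_m$'s. Distributivity then ensures that no unintended elements satisfy these patterns.
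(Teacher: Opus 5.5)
Your architecture is the Nerode--Shore strategy that the paper simply cites: code $(\mathbb{N},+,\times)$ in a countable distributive initial segment via \eqref{3.3}, quantify over subsets of the domain with exact pairs via \eqref{3.2}, express standardness as induction over pairs, and translate. As written, however, the step ``$\varphi^*$ holds iff $\varphi$ is true'' has a gap in the direction from $\varphi^*$ to $\varphi$.

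Your argument that every subset of $M$ is represented by a pair uses that the $a_m$ are pairwise incomparable join-primes of a countable distributive initial segment. That is true for the parameters you built from $\mathcal{L}_{\mathcal{M}}$. But $\varphi^*$ quantifies over \emph{all} $\bar p$ with $\mathrm{Mod}(\bar p)$, and $\mathrm{Mod}$ only asserts the arithmetic axioms. Take an unintended $\bar p$, for example one whose domain elements are atoms of a non-distributive interval, where $a\le a'\vee a''$ can happen (as it does in the Turing degrees). Then the pair-represented subsets need only be those closed under some closure operation, so $\mathrm{Std}(\bar p)$ may hold for a nonstandard model and make $\varphi^*$ true for a false $\varphi$. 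Likewise, your $\mathcal{I}_X$ is countable because it lies in the countable segment below $\mathbf{g}$, not because of \eqref{3.1}, and that reason is unavailable for arbitrary $\bar p$.

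The fix is to build faithfulness into the formula. For instance, require in $\mathrm{Mod}(\bar p)$ that $[\mathbf{0},\mathbf{g}]$ is a distributive lattice and that the domain consists of atoms of it. Then, for every such $\bar p$, the ideal generated by a set $X$ of domain elements is just the set of finite joins from $X$, which is countable if $X$ is. Atoms are join-prime, so \eqref{3.2} gives a pair representing exactly $X$. Applied to the standard cut, $\mathrm{Std}$ then forces genuine standardness. The domain is then countable, so pairs range over its full power set.

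The other missing piece is the gadget itself, which you defer, and the description you give cannot work. In a distributive lattice a join-irreducible $c$ is join-prime: $c\le x\vee y$ gives $c=(c\wedge x)\vee(c\wedge y)$, hence $c\le x$ or $c\le y$. So a join-irreducible below $a_m\vee a_n\vee a_k$ lies below one of $a_m,a_n,a_k$, and by minimality of the $a$'s it equals that one; it cannot witness a triple. The witnesses must sit \emph{above} the relevant $a$'s. For example, take the lattice of finite down-sets of a poset whose minimal elements are the $a_m$. For each true instance $m+n=k$ (respectively $m\cdot n=k$), include an element whose down-set consists of itself and $a_m,a_n,a_k$. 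Add parameter-marked gadgets that distinguish argument positions and separate witnesses from domain elements. You need this only for the standard model, not uniformly for all countable $\mathcal{M}$: once the formula filters parameters as above, any $\bar p$ satisfying $\mathrm{Mod}\wedge\mathrm{Std}$ codes a copy of $\mathbb{N}$.
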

	
\begin{proof}[Proof of Theorem~\ref{thm:secarith}]
Clearly the first-order theory of~$\cD_Z$ is $1$-reducible to second-order
arithmetic. The other direction follows immediately by Theorem~\ref{thm:NS}:
Condition~\eqref{3.1} that~$\cD_Z$ is an upper semilattice is
Proposition~\ref{prop:usl}, and Conditions~\eqref{3.2} and~\eqref{3.3} are
Theorems~\ref{thm:expair} and~\ref{thm:ctdl} above, respectively.
\end{proof}
	
\section{An exact pair theorem for the Ziegler degrees}\label{sec:expair}

In this section, we prove an exact pair theorem for the Ziegler degrees. Our
proof follows Spector~\cite{Sp56}, who proved the corresponding result for
the Turing degrees. Case~\cite[Theorem~4.3]{Ca70} showed that a similar proof
works for the enumeration degrees.
	
\begin{nonamethm}[Theorem~\ref{thm:expair}]
Every countable ideal~$\cI$ in~$\cD_Z$ has an exact pair; i.e., a pair of
degrees~$\deg(A)$ and~$\deg(B)$ such that $\deg(C) \in \cI$ iff $C \leq_Z A$
and $C \leq_Z B$.
\end{nonamethm}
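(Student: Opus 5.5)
We follow Spector's construction, adapted to enumeration and Ziegler reductions in the style of Case. If $\cI$ has a largest element there is nothing to do, so assume $\cI$ is generated by an increasing sequence of degrees. Fix representatives $C_0 \leq_Z C_1 \leq_Z \cdots$ whose degrees are cofinal in $\cI$; countability gives this sequence, and taking joins via Proposition~\ref{prop:usl} makes it increasing. We then build $A$ and $B$ as ``column sums'' plus generic coding. Let $A^{[n]} = \{\langle n,x\rangle : \langle n, x\rangle \in A\}$ denote the $n$th column. We arrange that $A^{[n]}$ and $B^{[n]}$ equal $\{\langle n,x\rangle : x \in C_n\}$ for all but finitely many $x$. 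More precisely, $A$ is determined on columns $n \ge$ some stage-dependent bound, and on finitely many exceptional points in earlier columns.

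Construct $A = \bigcup_s \alpha_s$ and $B = \bigcup_s \beta_s$ by finite extension. At stage $s$, $\alpha_s$ and $\beta_s$ specify membership on finitely many columns outside the coding, and the columns with index $\ge s$ are left to be filled by $C_n$. Since $C_n \leq_Z C_m$ for $n\le m$, any finite modification of the coding makes each $C_n \leq_Z A$ and $C_n \leq_Z B$: the positive and negative membership of $x$ in $C_n$ is read off from $\langle n,x\rangle \in A$, which is an $m$-reduction up to finitely many exceptions. Ziegler reducibility is refined by $m$-reducibility and is closed under finite variants, so $\cI \subseteq$ the lower cone of both $A$ and $B$.

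For exactness, we handle requirements $R_{e,i,j,k}$: if $C = (\Phi_e,\Psi_i)[A] = (\Phi_j,\Psi_k)[B]$, then $C \leq_Z C_n$ for some $n$. At stage $s$ we fix the first $s$ columns: by then $A$ and $B$ are determined as $\alpha_s$ together with finitely many columns that are uniformly $\leq_Z C_{s}$ (coded from $C_0 \oplus \dots \oplus C_{s-1}$). We then ask whether there exist finite extensions $\alpha' \supseteq \alpha_s$ and $\beta' \supseteq \beta_s$, consistent with this coding on columns $< s$, and a number $x$ that exhibits a disagreement. A disagreement means that $\alpha'$ puts $x$ into $\Phi_e$ via positive conditions while $\beta'$ puts $x$ into $\Psi_k$ via positive conditions plus one negative bit, or symmetrically. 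If such extensions exist, we take them and the requirement is met by diagonalization, because the finite conditions are preserved (negative bits are kept out of later extensions). If not, we claim $C \leq_Z C_s$. Then $x \in C$ iff some finite $A$-condition compatible with the coding of columns $<s$ puts $x$ in via $\Phi_e$; this is enumeration reducible to $C_0\oplus\dots\oplus C_{s-1}$. Likewise, $x \notin C$ iff some such condition puts $x$ in via $\Psi_i$. Each such condition mentions finitely many positive coded bits and at most one negative bit from the coded columns. A condition on the free part of the oracle is unconstrained, and the failure of the search guarantees that all compatible conditions agree with the truth.

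The main obstacle is exactly the last step, the single-negative-bit constraint in $\Psi$. For the Turing case both polarities are symmetric, and for the enumeration case one only needs $\Phi$. Here the witness for $x\notin C$ must be expressible with at most one negative query to $C_0\oplus\cdots\oplus C_{s-1}$. We plan to handle this by choosing the coding so that bits of $A$ outside the coded columns are unconstrained. Then any $\Psi_i$-axiom whose negative bit lies outside the coding can be satisfied by extension, and its validity as evidence for $x\notin C$ depends only on positive coded bits. An axiom whose single negative bit falls inside the coding translates directly into a single negative bit of $C_{s-1}$ (or of a $C_n$, converted via $C_n\leq_Z C_{s-1}$, which in turn uses only one negative bit). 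We must also check that composing these translations stays within $\leq_e^1$; this is essentially the transitivity of $\leq_Z$, which the paper notes.
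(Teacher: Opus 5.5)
Your proposal is correct and is essentially the paper's proof. You code the cofinal sequence into columns up to finite sets, diagonalize by finite extension when possible, and otherwise read $C$ off the stage-$s$ condition on columns $<s$; the single-negative-bit issue you flag is settled, as you say, by composing $\overline{C} \leq_e^1 C_0 \oplus \cdots \oplus C_{s-1}$ with $C_0 \oplus \cdots \oplus C_{s-1} \leq_Z C_s$ (transitivity of $\leq_Z$). The only difference is cosmetic: you build both $A$ and $B$ generically and search for a disagreeing pair of extensions, whereas the paper fixes $A = \bigoplus_n A_n$, builds only $B$, and compares extensions of $B_s$ against the actual $(\Phi_e,\Psi_i)[A]$.
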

	
\begin{proof}
Let~$\cI$ be a countable ideal in~$\cD_Z$. Let $A_0 \leq_Z A_1 \leq_Z \cdots
$ represent a cofinal sequence of degrees in~$\cI$. Let $A = \bigoplus_n A_n$
(viewing~$A$ as a function from~$\omega$ to~$\{0,1\}$). We will build~$B$ to
satisfy the following requirements:
\[
R_{\str{e,i,j,k}}: (\Phi_e, \Psi_i)[A] = (\Phi_j, \Psi_k)[B] = X \implies
  \exists n\left( X \leq_Z A_n \right)
\]
and ensure at the same time that $A_n \le_Z B$ for all~$n$.

(We note that Posner's trick does not seem to work in the Ziegler degrees --
one would need to check negative information about the oracle in order to
determine which computation to follow, which breaks Ziegler reducibility's
very parsimonious use of negative information.)

The strategy is to code each~$A_n$ into the $n$th column of~$B$, up to a
finite set. The construction will fix~$B$ (viewed as a function from~$\omega$
to~$\{0,1\}$) on~$\omega^{[< s]}$ by stage~$s$. At stage~$s$, we will
extend~$B$ on~$\omega^{[\geq s]}$ by a finite amount to ensure that $(\Phi_e,
\Psi_i)[A] \neq (\Phi_j, \Psi_k)[B]$ if possible. If not, then, using
that~$B$ is fixed on~$\omega^{[< s]}$ and $*$-computable from~$A_s$, we will
have that if~$(\Phi_e, \Psi_i)[A]$ is a valid Ziegler reduction
then~$(\Phi_e, \Psi_i)[A] \leq_Z A_s$.
		
\subsubsection*{Construction}
		
\paragraph*{\it Stage $s=0$:}

We set $B_0 = \emptyset$.
		
\paragraph*{\it Stage $s+1$:}

Suppose that~$B_s$ has been defined such that $\dom(B_s) \supseteq
\omega^{[<s]}$ and $B_s =^* A^{[<s]}$. Consider $R_s = R_{\langle
e,i,j,k \rangle}$. If there are a finite partial function~$\tau$ and $x \in
\omega$	with $\dom(\tau) \cap \dom(B_s) = \emptyset$ and $(\Phi_e,
\Psi_i)[A](x) \neq (\Phi_j, \Psi_k)[B_s \cup \tau](x)$, then set $C_{s+1} =
B_s \cup \tau$. Else set $C_{s+1} = B_s$.
We now need to fill the rest of the~$s$th column with~$A_s$ without
disturbing the diagonalization.	Let~$A^0_s$ be the subset of the
function~$A_s$ with domain $\omega \setminus \dom(C_{s+1})^{[s]}$. Then set
$B_{s+1} = C_{s+1} \cup (\{s\} \times A_s^0)$.
		
\subsubsection*{Verification}

Firstly, since $B^{[s]} =^* A_s$, we have that $A_s \leq_Z B$ for
every~$s$ (non-uniformly in~$s$). 	

Moreover, if~$\tau$ is found at stage~$s+1$, then~$R_s$ is immediately
satisfied since $B_{s} \cup \tau$ is preserved in~$B$ and thus $(\Phi_e,
\Psi_i)[A] \neq (\Phi_j, \Psi_k)[B]$. If no~$\tau$ is found at stage~$s+1$
and $(\Phi_e, \Psi_i)[A] = (\Phi_j, \Psi_k)[B] = X$, then we argue that $X
\leq_Z A_s$. Indeed, if $x \in X = (\Phi_e, \Psi_i)[A]$ and we did not find
$\tau$ at stage $s+1$, then any convergent computation of $(\Phi_j, \Psi_k)$
from an extension of~$B_s$ must put~$x$ into~$X$. Furthermore, since $x \in
(\Phi_j,\Psi_k)[B]$, there is some such convergent computation. Thus, $X
\leq_e B_s$ via the set $\{ \langle x, D' \rangle \mid \exists \langle x,D
\rangle \in \Phi_j\; (D' = D \cap \dom(B_s)) \}$. On the other hand, $x
\notin X$ iff $\Psi_i[A](x) = 1$. Again, since no~$\tau$ was found at
stage~$s+1$, any convergent computation of $(\Phi_j,\Psi_k)$ from an oracle
extending~$B_s$ must agree with $x \notin X$; moreover, such a convergent
computation exists since $\Psi_k[B](x) \downarrow = 1$. As before, this
implies that $\overline X \leq_e^1 B_s$. Hence, using that if $Y =^*
Z$, then $Y \leq_e Z$ and $Y \leq_e^1 Z$, we have that
\[
X \leq_e B_s \equiv_e B^{[<s]} \equiv_e A_0 \oplus \cdots \oplus A_{s-1}
\]
and
\[ X
\leq_e^1 A_0 \oplus \cdots \oplus A_{s-1}.
\]
Thus,
\[
X \leq_Z A_0 \oplus \cdots \oplus A_{s-1} \leq_Z A_s,
\]
as required.
\end{proof}
	
\section{Embedding all countable distributive lattices with least element}%
\label{sec:latt}

In this section, we adapt Lachlan's argument that every countable
distributive lattice with least element embeds as an initial segment of the
degree structure~$\cD_T$ of the Turing degrees to our setting:
	
\begin{nonamethm}[Theorem~\ref{thm:ctdl}]
Every countable distributive lattice~$L$ with least element embeds as an
initial segment of~$\cD_Z$.
\end{nonamethm}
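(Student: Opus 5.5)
I will follow Lachlan's method, via the lattice-table and perfect-tree framework familiar from Lerman's book, and adapt each step so that the reductions built are Ziegler reductions. The first step is purely combinatorial. Dualize $L$ so that it becomes a countable distributive lattice with greatest element, and present it as the direct limit of finite distributive lattices $L_0 \subseteq L_1 \subseteq \cdots$. For each finite $L_n$, fix a finite \emph{usable} lattice table $\Theta_n$: a finite set of functions $\alpha : L_n \to \omega$ such that
\[
\alpha \equiv_a \beta \iff \alpha(a) = \beta(a),
\]
with $\equiv_1$ trivial, $\equiv_0$ universal, $\equiv_{a \wedge b} = \equiv_a \cap \equiv_b$, and the homogeneity and interpolation properties. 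Distributive lattices admit such tables, and they can be chosen with $\Theta_n$ extending $\Theta_{n-1}$ coherently. This part is unchanged from the Turing case.

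Next I will build a generic $G : \omega \to \omega$ with values in $\Theta$, by forcing with uniform $\Theta$-trees. For $a \in L$, define $G^{[a]}(n) = G(n)(a)$, coded as a set. I would choose the coding so that $G^{[a]}$ is Ziegler-reducible to $G^{[b]}$ whenever $b \le a$ in the dual order. The natural choice is to code functions by their graphs, so $G^{[a]} = \{\langle n, G(n)(a)\rangle\}$. Then $\langle n,m\rangle \notin G^{[a]}$ is witnessed by one positive fact $\langle n, k\rangle \in G^{[b]}$ with $k$ mapping to some $m' \ne m$. This needs no negative bits at all, so it is an $m$-style reduction and certainly Ziegler. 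The map $a \mapsto \deg_Z(G^{[a]})$ should preserve joins, since $G^{[a\wedge b]} \equiv_Z G^{[a]} \oplus G^{[b]}$ by interpolation in the table, and should send the least element to $\mathbf 0$. I then need to prove the two genericity requirements.

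The first requirement is noncomparability: $G^{[a]} \not\le_Z G^{[b]}$ when $a \not\ge b$. This is handled by a splitting-diagonalization argument. Since $\le_Z$ implies $\le_T$, Lachlan's $e$-splitting trees for Turing functionals already handle it. The same is true of Ziegler functionals made total on the tree.

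The second requirement, initial segment, is the main obstacle. If $X = (\Phi_e, \Psi_i)[G]$, with $G$ coded as a set, then I must show $X \equiv_Z G^{[a]}$ for some $a$. Lachlan obtains $X \equiv_T G^{[a]}$ by taking $a$ minimal with $e$-splittings only on $\equiv_a$-inequivalent branches. The reduction $G^{[a]} \le_T X$ then searches the tree for the branch compatible with $X$. This is not a Ziegler reduction from $X$: recognizing incompatibility uses negative information about $X$. My plan is to make the tree search use only positive data plus one negative bit. Refine the trees so that $\Phi_e$ and $\Psi_i$ are both forced total and \emph{disjoint} on the tree. Arrange that every $\equiv_a$-split is witnessed by a point $x$ with $x \in \Phi_e$ on one side and $x \in \Psi_i$ on the other. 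Then $G^{[a]}(n) = k$ holds iff there is a tree node of value $k$ at $n$ whose positive $X$-splitting witnesses all occur in $X$. That gives enumeration-reducibility. Similarly, $G^{[a]}(n) \ne k$ is witnessed by one point in $X$ that refutes $k$. With the graph coding this gives $\overline{G^{[a]}} \le_e^1 X$, using in fact no negative bits.

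The reverse direction, $X \le_Z G^{[a]}$, comes from the tree being $\equiv_a$-splitting-free for $e$ above $a$. Then $x \in X$ is enumerated from finitely many positive facts about $G^{[a]}$, by reading $\Phi_e$ along any branch agreeing with $G^{[a]}$. Likewise $x \notin X$ comes from $\Psi_i$ with positive facts about $G^{[a]}$ plus one negative bit of $G$ via $\Psi_i$. I expect the delicate point to be replacing the single negative oracle bit of $G$ in $\Psi_i$ by the graph coding. Failing that, I would strengthen the forcing so that the relevant negative bit lies in a part of $G$ determined by $G^{[a]}$.
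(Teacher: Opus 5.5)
There is a genuine gap, and it is the graph coding itself. For a total function $g$ we have $\overline{\mathrm{graph}(g)} \le_e \mathrm{graph}(g)$, so the single negative oracle bit is never needed. Consequently $X \le_Z \mathrm{graph}(g)$ iff $X \le_T g$, and $X \equiv_Z \mathrm{graph}(g)$ forces $\overline{X} \le_e \mathrm{graph}(g) \le_e X$. Every nonzero degree in your intended image therefore has the form $\deg_Z(\mathrm{graph}(h))$. However, below each of them lies a nonzero degree not of this form, whatever the forcing does. To see this, take a noncomputable $B \le_T g$, the (irrational, noncomputable) real $r = \sum_{n \in B} 2^{-n-1}$, and the sets $X_- = \{q \in \mathbb{Q} : q < r\}$ and $X_+ = \{q \in \mathbb{Q} : q > r\}$. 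Both are noncomputable and $\le_T g$. If $X_+ = \Phi[X_-]$ for an enumeration operator $\Phi$, then
\[
X_+ = \{q : \exists p < q\; (q \in \Phi[\{p' \in \mathbb{Q} : p' < p\}])\},
\]
which is c.e. Symmetrically, $X_- \le_e X_+$ would make $X_-$ c.e., and both cannot hold since $r$ is noncomputable. So for $Y = X_-$ or $Y = X_+$, the degree $\deg_Z(Y)$ is nonzero, lies below $\deg_Z(\mathrm{graph}(g))$, and is not in your image. Thus $a \mapsto \deg_Z(\mathrm{graph}(G^{[a]}))$ is never onto an initial segment; already for the two-element lattice your top degree cannot be minimal.

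You also placed the difficulty in the wrong direction. With graphs, $X \le_Z \mathrm{graph}(G^{[a]})$ is automatic once $X \le_T G^{[a]}$; what fails is $\mathrm{graph}(G^{[a]}) \le_e X$. Your argument for it breaks because on the $\Psi_i$-side of a split the only witness is $x \notin X$, which is negative information about $X$. Thinning the tree cannot produce positive witnesses on both sides. For $X = X_-$, such witnesses would give rationals with $r_1 < x < r_0 < y < r_1$, where $r_0, r_1$ are the reals along the two branches.

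The paper avoids graphs altogether. It takes $f_1$ to be a $\{0,1\}$-valued path on very uniform f-trees whose branchings satisfy $\tau^l_0 \le \tau^l_1$ pointwise. Each $f_a = [\Theta_{1_L,a}](f_1)$ keeps only the branching levels indexed by $\eta(a)$, and the reductions $f_b \le_Z f_a$ are really $m$-reductions (Lemma~\ref{lem:Theta-is-a-Z-functional}). In Proposition~\ref{prop4}, each branching bit of $f_{a_0}$ is matched with a single splitting point $y$ such that $f_{a_0}(x') = X(y)$. So $f_{a_0}(x') = 0$ costs exactly the one negative bit $y \notin X$ that $\le_e^1$ permits. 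This one-negative-bit bookkeeping with sets rather than graphs, which frees the $\mathbf{f}_a$ from being total, is the missing idea.
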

	
\begin{proof}
We follow Lachlan's proof in~\cite{La68}, using somewhat updated notation so
that the reader can follow more easily.

Let~$L$ be a countable distributive lattice with least element~$0_L$. Without
loss of generality, we may assume that it also has a greatest element~$1_L$.
We let $(L_s)_{s < \omega}$ be an increasing approximation to~$L$ such
that~$L_0$ is the two-element lattice and~$L_{s+1}$ is a \emph{simple
extension} of~$L_s$, i.e.,~$L_{s+1}$ is generated, under join and meet,
by~$L_s$ and a single new element~$a_s$.
		
\subsection{Definitions}

When considering a finite (distributive) sublattice~$L'$ of~$L$, we will fix
the following lattice-theoretic notation:
		
\begin{defn}\label{def:latt}
Let~$L'$ be a finite distributive lattice and~$a$ a (nonzero)
join-irreducible element in~$L'$. Then we let $S_a = \{b \in L' \mid b
\ngeq_{L'} a\}$. (By distributivity,~$S_a$ has a greatest element~$b_a$.)
\end{defn}

As usual, we will produce a degree $\mathbf{f}_1 = \mathbf{f}_{1_L}$,
represented by some $f_1 = f_{1_L} \in 2^\omega$, such that $\cD_Z(\leq
\mathbf{f}_1) \cong L$ by forcing with f-trees. When embedding a finite
distributive lattice~$L'$, we may fix in advance a representation of~$L'$ as
a subset of a finite Boolean algebra, thought of as a subset of~$\cP(S)$ for
some initial segment~$S$ of~$\omega$. Here, however, we will have to modify
the representation as~$L_s$ grows. This will be kept track of in the forcing
conditions.

We first recall some definitions from Jacobsen-Grocott/Lempp/Scott
\cite{JLSta}.
		
\begin{defn}\label{def:ftree}
\begin{enumerate}
\item\label{it:ftree}
An \emph{f-tree} is a map $\cT: 2^{< \omega} \to 2^{< \omega}$ such that
for all $\pi, \rho \in 2^{< \omega}$, $\pi \subset \rho$ iff $\cT(\pi)
\subset \cT(\rho)$, and $\pi <_\lex \rho$ iff $\cT(\pi) <_\lex \cT(\rho)$.
(We also allow the \emph{degenerate f-tree} $\cT: 2^{< \omega} \to 2^{<
\omega}$ where $\cT(\rho) = \str{0^{|\rho|}}$ for all $\rho \in 2^{<
\omega}$.)
\item\label{it:full}
The \emph{full binary f-tree} is the f-tree~$\cU$ defined by setting
$\cU(\rho) = \rho$ for all $\rho \in 2^{<\omega}$.
\item\label{it:paths}
We denote the set of \emph{infinite paths} through an f-tree~$\cT$ by
$[\cT] = \{f \in 2^\omega \mid \exists^\infty \rho\, (\cT(\rho) \subset
f)\}$.
\item\label{it:subftree}
A \emph{sub-f-tree} of an f-tree~$\cT$ is an f-tree~$\cS$ with $\ran(\cS)
\subseteq \ran(\cT)$. We then write $\cS \subseteq \cT$. (In particular,
the degenerate f-tree is a sub-f-tree of every f-tree~$\cT$ with $0^\omega
\in [\cT]$.)
\item\label{it:contra}
An f-tree~$\cT$ \emph{has no explicit $\str{e,i}$-contradictions} if, for
every $\tau \in \ran(\cT)$ and every $x \in \omega$, it is not the case
that $\Phi_e^\tau(x) = 1$ and $\Psi_i^\tau(x) = 1$ (i.e.,~$x$ is not
enumerated into both the set being computed and its complement).
\item\label{it:veryunif}
A \emph{very uniform} f-tree is an f-tree~$\cT$ satisfying:
\begin{itemize}
\item
For all $\pi, \rho \in 2^{< \omega}$, $|\pi| = |\rho|$ implies
$|\cT(\pi)| = |\cT(\rho)|$;
\item
For all $l \in \omega$ and $k<2$, there is~$\tau_k^l$ such that for all
$\rho \in 2^l$, $\cT(\cat \rho k) = \cat{\cT(\rho)}{\tau_k^l}$; and
\item
for all $l \in \omega$, $k<2$, and $x < |\tau^l_k|$, if $\tau^l_0(x) \neq
\tau^l_1(x)$ then $\tau^l_0(x) < \tau^l_1(x)$. (Thus, in particular, any
disagreements on any fixed~$\tau^l_k$ ``agree'' with each other.)
\end{itemize}
\item\label{it:split}
$\tau_0$ and~$\tau_1$ \emph{$\str{e,i}$-split~$\sigma$ on~$\cT$} if
$\tau_0, \tau_1, \sigma \in \ran(\cT)$; $\tau_0, \tau_1 \supset \sigma$;
and
$$
x \notin (\Phi_e, \Psi_i)[\tau_0]\ \text{and}\
x \in (\Phi_e, \Psi_i)[\tau_1].
$$
If in addition $\tau_0 = \cT(\pi_0)$, $\tau_1 = \cT(\pi_1)$, $|\pi_0| =
|\pi_1|$, and~$\pi_0$ and~$\pi_1$ differ in exactly one bit~$j$, say, then
we say that~$\tau_0$ and~$\tau_1$
\emph{minimally~$\str{e,i}$-split~$\sigma$}. In a context with a fixed~$n <
\omega$, we may in addition specify that the minimal splitting is
\emph{via~$j'$}, where~$j' \in [0,n)$ and~$j' \equiv j \pmod{n}$. (In the
proof, we will be working with a lattice embedding $\eta: L' \to
\cP([0,n))$ for some~$n$. This will be where~$n$ arises from.)
\item\label{it:splitting}
A very uniform f-tree~$\cT$ is \emph{$\str{e,i}$-splitting} if, for every
$\rho \in \dom(\cT)$, $\cT(\catt\rho0)$ and $\cT(\catt\rho1)$
$\str{e,i}$-split~$\cT(\rho)$ on~$\cT$.
\end{enumerate}
\end{defn}

We first note the following key observation.
		
\begin{lemma}\label{lem:minsplit}
Let~$\cT$ be a very uniform f-tree which has no explicit
$\str{e,i}$-contra\-dictions, and suppose there are $\tau_0 = \cT(\pi_0)$ and
$\tau_1 = \cT(\pi_1)$ which $\str{e,i}$-split~$\sigma$ on~$\cT$. Then there
are $\tau'_0 = \cT(\pi'_0)$ and $\tau'_1 = \cT(\pi'_1)$ which minimally
$\str{e,i}$-split~$\sigma$ on~$\cT$. Furthermore, if $|\tau_0| = |\tau_1|$,
then $\pi'_0$ and~$\pi'_1$ can be chosen to differ at some~$j$ at
which~$\pi_0$ and~$\pi_1$ also differ.
\end{lemma}
		
		
\begin{proof}
Suppose that~$\tau_0$ and~$\tau_1$ $\str{e,i}$-split~$\sigma$ on~$\cT$ at
some argument~$x$, say, by symmetry, $x \in \Psi_i[\tau_0]$ via a finite set
$D \subseteq \tau_0$ and some~$y$ with $\tau_0(y)=0$, and $x \in
\Phi_e[\tau_1]$ via a finite set $E	\subseteq \tau_1$. Without loss of
generality, we may assume that $|\pi_0| = |\pi_1|$ and so $|\tau_0| =
|\tau_1|$. But then $\tau_1(y)$ is defined, and furthermore, $\tau_0(y)=0$ is
caused by $\pi_0(j)=0$ for some~$j$. Now if	$\tau_1(y)=0$, then we can set
$\pi'(k) = \max(\pi_0(k), \pi_1(k))$ for all $k < |\pi_0|$, and so $\tau' =
\cT(\pi')$ will witness an $\str{e,i}$-contradiction via~$x$. Thus we must
have $\tau_1(y)=1$, and we can set $\pi'_0(k) = \max(\pi_0(k), \pi_1(k))$ for
all $k \neq j$, and $\pi'_1(k) = \max(\pi_0(k), \pi_1(k))$ for all $k <
|\pi_0|$. But then $\tau'_0 = \cT(\pi'_0)$ and $\tau'_1 = \cT(\pi'_1)$
minimally $\str{e,i}$-split~$\sigma$ on~$\cT$ as desired.
\end{proof}

We also need the following notation.

\begin{notation}\label{not:[]}
Given a string $\rho \in 2^{<\omega}$, integers $k \ge l > 0$ and a strictly
increasing sequence $(h_0, h_1, \ldots, h_{l-1})$ of integers~$<k$, we define
$\rho[k; h_0, h_1, \ldots, h_{l-1}] \in 2^{<\omega}$ by setting, for $p \in
\omega$ and $q<l$:
$$
\rho[k; h_0, h_1, \ldots, h_{l-1}](pl+q) = \rho(pk+h_q),
$$
and similarly for infinite paths~$f$.

So $\rho[k; h_0, h_1, \ldots, h_{l-1}]$ is obtained from~$\rho$ by deleting,
from every~$k$ consecutive bits, the bits not in positions $h_0, h_1, \ldots,
h_{l-1}$. Note that the map $\rho \mapsto \rho[k; h_0, h_1, \ldots, h_{l-1}]$
is surjective onto~$2^{<\omega}$.
\end{notation}
		
To illustrate,
consider the following example: Suppose $\rho = (0,1,1,1,0,1,0,0,1,0)$ and
let $(0,1,3)$ be a the strictly increasing sequence of integers. Then
\[
\rho[5;0,1,3] = (\mathbf{0},\mathbf{1},1,\mathbf{1},0,
  \mathbf{1},\mathbf{0},0,\mathbf{1},0) = (0,1,1,1,0,1).
\]
		
\subsection{The forcing partial order}
		
\begin{defn}\label{def:triple}
Let~$\bP$ be the set of pairs or \emph{conditions} $(\eta, \cT)$ where
\begin{enumerate}
\item\label{it:treta}
$\eta$ is a lattice embedding from a finite sublattice $L' \subseteq L$
into $\cP(\omega)$ (ordered by inclusion) where $\eta(0_L) = \emptyset$ and
$\eta(1_L) = [0,n)$ for some $n \in \omega$.
\item\label{it:trT}
$\cT$ is a map from~$L'$ to computable very uniform f-trees such that
$\cT(a)$ is degenerate iff $a = 0_L$.
\end{enumerate}
\end{defn}

Before we define the ordering on~$\bP$, we introduce some further notation:
Given $(\eta, \cT) \in \bP$ and $a \geq_{L'} b$, we need a map $\Theta_{a,b}$
which maps nodes and paths of $\cT(a)$ to nodes and paths of $\cT(b)$. This
will ultimately guarantee the Ziegler reduction between $\mathbf{f}_a$ and
$\mathbf{f}_b$, the degrees corresponding to~$a$ and~$b$ respectively.
		
\begin{notation}\label{def:trTheta}
Let $(\eta, \cT) \in \bP$ and $L' \subseteq L$ be the domain of~$\eta$.
Suppose that $\eta(1_L) = [0,n)$. For $a, b \in L'$ with $a \geq_{L'} b >
0_L$, we have $\eta(a) \supseteq \eta(b)$ are nonempty subsets of $[0,n)$;
say, $\eta(a) = \{i_0, i_1, \ldots, i_{k-1}\}$ and $\eta(b) = \{j_0, j_1,
\ldots, j_{l-1}\} = \{i_{h_0}, i_{h_1}, \ldots, i_{h_{l-1}}\}$, all listed in
increasing order. Then we define
\[
\Theta_{a,b}(\cT(a)(\rho)) =
  \cT(b)\left(\rho[k; h_0, h_1, \ldots, h_{l-1}]\right).
\]
This is a $\subset$-preserving map from~$\ran(\cT(a))$ onto~$\ran(\cT(b))$
which naturally induces a surjection $[\Theta_{a,b}]: [\cT(a)] \rightarrow
[\cT(b)]$. For each $a \in L'$, we define
\[
\Theta_{a,0_L}(\cT(a)(\rho)) = \cT(0_L)(\rho),
\]
which induces the trivial surjection $[\Theta_{a,0_L}]: [\cT(a)] \rightarrow
[\cT(0_L)]$.

For $a \in L'$ and $f_1 = f_{1_L} \in [\cT(1_L)]$, we set $f_a =
[\Theta_{1_L,a}](f_1)$. Similarly, for $\sigma = \sigma_{1_L} =
\cT(1_L)(\rho)$, we define $\sigma_a = \Theta_{1_L,a}(\sigma)$.

Write~$\Theta$ for the collection of such maps corresponding to $(\eta,
\cT)$.
(Analogously, we write~$\Theta^*$ for the collection of such maps
corresponding to $(\eta^*, \cT^*)$, and similarly for other
sub/superscripts.)
\end{notation}
		
\begin{defn}\label{def:partial-order}
The ordering~$\preceq$ on~$\bP$ is defined by letting $(\eta^*, \cT^*)
\preceq (\eta, \cT)$ iff
\begin{enumerate}
\item\label{it:treta*}
$\dom(\eta^*) \supseteq \dom(\eta)$ and for every $a \in \dom(\eta)$, we
have $\eta(a) = \eta^*(a) \cap \eta(1_L)$,
\item\label{it:trT*}
$\cT^*(a) \subseteq \cT(a)$ for every $a \in \dom(\eta)$, and
\item\label{it:trTheta*}
for all $a \geq_{L'} b$ in $L' = \dom(\eta)$, we have $[\Theta^*_{a,b}] =
[\Theta_{a,b}] \restr [\cT^*(a)]$.
\end{enumerate}
\end{defn}

We note the following obvious properties, all immediate by
Notation~\ref{not:[]} and our definition of f-trees:
		
\begin{rem}\label{rem:triple}
In the notation of Definition~\ref{def:triple}, we have:
\begin{enumerate}
\item\label{it:abc>0}
For $a \ge_{L'} b \ge_{L'} c >_{L'} 0_L$, we have $\Theta_{a,c} =
\Theta_{b,c} \circ \Theta_{a,b}$ (when considered as maps on strings).
\item\label{it:abc=0}
For $a \ge_{L'} b \ge_{L'} c \ge_{L'} 0_L$, we have $[\Theta_{a,c}] =
[\Theta_{b,c}] \circ [\Theta_{a,b}]$ (when considered as maps on paths).
\item\label{it:ab_a}
Recall that $\eta(1_L) = [0,n)$. For a (nonzero) join-irreducible element
$a \in L'$, fix~$b_a$ as in	Definition~\ref{def:latt}. Then $\eta(a)
\not\subseteq \eta(b_a)$, and for each $i \in \eta(a) - \eta(b_a)$, each $j
\in \omega$ with $j \equiv i \pmod{n}$ and each $\pi \in 2^j$, we have
$\Theta_{1,b_a}(\cT(1_L)(\catt\pi0)) =
\Theta_{1,b_a}(\cT(1_L)(\catt\pi1))$, but
$\Theta_{1,a}(\cT(1_L)(\catt\pi0)) \neq \Theta_{1,a}(\cT(1_L)(\catt\pi1))$.
\end{enumerate}
\end{rem}

We note that Remark~\ref{rem:triple}\eqref{it:ab_a} takes the place of the
notion of ``alternating f-trees'' in Jacobsen-Grocott/Lempp/Scott
\cite{JLSta}.

The key property of $\Theta_{a,b}$ is the following:
		
\begin{lemma}\label{lem:Theta-is-a-Z-functional}
Let $(\eta, \cT)$ be a condition with $L' = \dom(\eta)$ and $a \geq_{L'} b$
in $L'$. Suppose $f_a \in [\cT(a)]$ and $f_b \in [\cT(b)]$ with
$[\Theta_{a,b}](f_a) = f_b$. Then $f_b \leq_Z f_a$.
\end{lemma}

In fact, the proof will show that indeed $f_b \leq_m f_a$.
		
\begin{proof}
To check whether $x \in f_b$, we first check whether there is $i < 2$ such
that $\sigma(x) = i$ for every $\sigma$ on $\cT(b) \restr x+1$, in which case
$f_b(x) = i$. (Notice that since $\cT(b)$ is computable, this can be ``built
into'' the reduction and thus does require use of the oracle.) Else, there is
some $\rho \in 2^{< \omega}$ such that $f_b(x) = i$ iff~$f_b$ extends
$\cT(b)(\cat \rho i)$. In this case, there is $\pi \in 2^{<\omega}$ with
$\Theta_{a,b}(\cT(a)(\cat \pi i)) = \cT(b)(\cat \rho i)$. Since $\cT(a)$ is
very uniform, there is some~$y$ such that $f_a(y) = i$ iff~$f_a$ extends
$\cT(a)(\cat \pi i)$. Thus, summarizing the argument above, we have that $x
\in f_b$ iff $y \in f_a$. So $f_b \leq_Z f_a$.
\end{proof}
		
\subsection{Four propositions}

Here we lay out the four propositions that form the basis of Lachlan's proof,
adapted to our setting. The first allows us to ``stretch'' out the f-tree so
as to allow extending the finite lattice.
		
\begin{prop}[{following Lachlan \cite[Proposition~1]{La68}}]\label{prop1}
Let $(\eta, \cT)$ be a condition. Then there is a condition $(\eta^*, \cT^*)
\preceq (\eta, \cT)$ such that
\begin{enumerate}
\item\label{prop1eta}
$\dom(\eta^*) = \dom(\eta)$ and $\forall S \in \ran(\eta^*)\, (\text{$|S|$
is even})$ and
\item\label{prop1T}
$\cT^* = \cT$.
\end{enumerate}
\end{prop}
		
\begin{proof}
For~\eqref{prop1eta}, define $\eta^*(a) = \eta(a) \cup \{j + n \mid j \in
\eta(a)\}$ where $n = |\eta(1_L)|$. Then~\eqref{prop1T} follows easily.
To finish, we check that $[\Theta^*_{a,b}] = [\Theta_{a,b}] \restr
[\cT^*(a)]$. Suppose that $|\eta(a)| = k$ and $|\eta(b)| = l$, and $\eta(b)$
consists of the $h_0$th, $\ldots$, $h_{l-1}$th elements of $\eta(a)$. Then
$|\eta^*(a)| = 2k$ and $|\eta^*(b)| = 2l$, and $\eta^*(b)$ consists of the
$h_0$th, $\ldots$, $h_{l-1}$th, $h_l$th, $\ldots$, $h_{2l-1}$th elements of
$\eta^*(a)$. By the definition of~$\eta^*$, for each $q \geq l$ we have that
$h_q = h_{q-l} + k$. Hence, for every string~$\rho$, we have that $\rho[k;
h_0, \ldots h_{l-1}] = \rho[2k; h_0, \ldots h_{l-1}, h_l, h_{2l-1}]$ and
thus, since $\cT^* = \cT$, we have that $[\Theta^*_{a,b}] = [\Theta_{a,b}]
\restr [\cT^*(a)]$.
\end{proof}

We can now specify how we modify the f-tree when we expand the lattice.
		
\begin{prop}[{following Lachlan \cite[Proposition~2]{La68}}]\label{prop2}
Let $(\eta, \cT)$ be a condition such that $\forall S \in \ran(\eta)\,
(\text{$|S|$ is even})$. Let~$L'$ be a simple extension	of~$\dom(\eta)$. Then
there is a condition $(\eta^*, \cT^*) \preceq (\eta, \cT)$ such that
\begin{enumerate}
\item\label{prop2eta}
$\dom(\eta^*) = L'$ and $\eta^* \supseteq \eta$ and
\item\label{prop2T}
$\cT^* \restr \dom(\eta) = \cT$.
\end{enumerate}
\end{prop}
		
\begin{proof}
Say~$L'$ generated by~$\dom(\eta)$ and $a' \in L'$. Let
\begin{align*}
a^- &= \bigvee   \{a \in \dom(\eta) \mid a <_{L'} a'\}\text{ and}\\
a^+ &= \bigwedge \{a \in \dom(\eta) \mid a >_{L'} a'\}.
\end{align*}

Then $|\eta(a^-)|+2 \le |\eta(a^+)|$, so fix $j \in \eta(a^+) - \eta(a^-)$.
Setting $\eta^*(a') = \eta(a^-) \cup \{j\}$ determines~$\eta^*$ uniquely to
satisfy Definition~\ref{def:triple} and~\eqref{prop2eta}. Furthermore, since
$\eta^*(a) = \eta(a)$ and $\cT^*(a) = \cT(a)$ for every $a \in \dom(\eta)$,
we have that $[\Theta^*_{a,b}] = [\Theta_{a,b}] \restr [\cT^*(a)]$ whenever
$a \geq_{L'} b$ in $\dom(\eta)$.
%
%
%
\end{proof}

Next, we show that we can diagonalize or force partiality.
		
\begin{prop}[{following Lachlan \cite[Proposition~3]{La68}}]\label{prop3}
Let $(\eta, \cT)$ be a condition; let $a, b \in \dom(\eta) = L'$ with $a
\nleq_{L'} b$; and let $e, i \in \omega$. Then there is a condition		
$(\eta^*, \cT^*) \preceq (\eta, \cT)$ such that
\begin{enumerate}
\item\label{prop3eta}
$\eta^* = \eta$,
\item\label{prop3T}
for every $c \in \dom(\eta)$, $\cT^*(c) \subseteq \cT(c)$, and
\item\label{prop3neq}
for every $f_1 \in [\cT(1_L)]$, we have $(\Phi_e,\Psi_i)[{f_b}] \neq f_a$.
\end{enumerate}
\end{prop}
		
\begin{proof}
Recall our notation that $\sigma_a = \Theta_{1_L,a}(\sigma)$.
Let $k = |\eta(a)|$ and $l = |\eta(b)|$. First check whether there are $x<k$
and $\sigma \in \ran(\cT(a))$ such that
\begin{itemize}
\item
$x \in \Phi_e[\sigma]$ and $x \in \Psi_i[\sigma]$, or
\item
for all $\tau \in \ran(\cT(a))$ extending~$\sigma$, neither $x \in
\Phi_e[\tau]$ nor $x \in \Psi_i[\tau]$.
\end{itemize}
If so, then fix~$\pi$ with $\sigma = \big(\cT(1_L)(\pi)\big)_a$, where
without loss of generality we may assume that~$|\pi|$ is a multiple of~$n$.
Then we set $\cT^*(1_L)(\rho) = \cT(1_L)(\ca{\pi}{\rho})$ for all $\rho \in
2^{<\omega}$; and now the condition that $[\Theta^*_{1_L,c}] =
[\Theta_{1_L,c}] \restr [\cT^*(1_L)]$ fully determines $\Theta^*(c)$ for all
$c \in L'$.
Note that since~$|\pi|$ is a multiple of~$n$, we have that $[\Theta^*_{c,d}]
= [\Theta_{c,d}] \restr [\cT^*(c)]$ whenever $c \geq_{L'} d$ in~$L'$.
Moreover, now $(\Phi_e,\Psi_i)[f_b](x)$ is not a valid computation for
every~$f_1$ in $[\cT(1_L)]$, so in particular is not equal to $f_a(x)$.

Otherwise, fix $p>0$ and $\rho \in 2^{pl}$ such that
$(\Phi_e,\Psi_i)[\cT(b)(\rho)] \restr |\cT(a)(\str{0^k})|$ is defined, i.e.,
that for each $x < |\cT(a)(\str{0^k})|$, $x \in \Phi_e(\cT(b)(\rho))$ or $x
\in \Psi_i(\cT(b)(\rho))$ (and not both). Since $a \nleq_{L'} b$, there is
some $j \in \eta(a) - \eta(b)$. Hence we can find $\pi', \pi'' \in
2^{<\omega}$ such that $\pi'_b = \pi''_b$ and so
$$
\Theta_{1_L,b}(\cT(1_L)(\pi')) =
\Theta_{1_L,b}(\cT(1_L)(\pi'')) = \cT(b)(\rho).
$$
But $\pi'(j) \neq \pi''(j)$, so $\pi'_a \neq \pi''_a$ and thus
$$
\Theta_{1_L,a}(\cT(1_L)(\pi')) \restr |\cT(a)(\str{0^k})| \neq
\Theta_{1_L,a}(\cT(1_L)(\pi'')) \restr |\cT(a)(\str{0^k})|.
$$
Again we may assume without loss of generality that $|\pi'| = |\pi''|$ is a
multiple of~$n$. And now we can set $\pi = \pi'$ or $\pi = \pi''$ such that
$\sigma_1 = \cT(1_L)(\pi)$ while ensuring that $\sigma_b = \cT(b)(\rho)$ as
well as
$$
(\Phi_e,\Psi_i)[\sigma_b] \restr |\cT(a)(\str{0^k})|
\neq \sigma_a \restr |\cT(a)(\str{0^k})|,
$$
which will establish~\eqref{prop3neq}.
		
As before, we set $\cT^*(1_L)(\nu) = \cT(1_L)(\ca{\pi}{\nu})$ for all $\nu
\in 2^{<\omega}$; and again the condition that $[\Theta^*_{1_L,c}] =
[\Theta_{1_L,c}] \restr [\cT^*(1_L)]$ fully determines $\cT^*(c)$ for all $c
\in L'$.
This implies~\eqref{prop3T} and $[\Theta^*_{a,b}] = [\Theta_{a,b}] \restr
[\cT^*(a)]$, using that~$|\pi|$ is a multiple of~$n$.
\end{proof}

The last proposition is the core of the argument, allowing us to conclude
that we can force the degrees below~$\mathbf{f}$ (the path we are building in
$\cT(1_L)$) to form exactly the lattice~$L$ -- and not a superstructure.
		
		
\begin{prop}[{following Lachlan \cite[Proposition~4]{La68}}]\label{prop4}
Let $(\eta, \cT)$ be a condition with $\dom(\eta) = L'$, and let $e, i \in
\omega$. Then there are a condition $(\eta^*, \cT^*) \preceq (\eta, \cT)$ and
$a_0 \in L'$ such that
\begin{enumerate}
\item\label{prop4eta}
$\eta^* = \eta$,
\item\label{prop4T}
for every $a \in \dom(\eta)$, $\cT^*(a) \subseteq \cT(a)$, and
\item\label{prop4eq}
there is $a_0 \in L$ such that for every $f_1 \in [\cT^*(1_L)]$, if
$(\Phi_e,\Psi_i)[f_1]$ is a valid computation, then $(\Phi_e,\Psi_i)[f_1]
\equiv_Z f_{a_0}$.
\end{enumerate}
\end{prop}
		
\begin{proof}
Fix~$n$ so that $\eta(1_L) = [0,n)$.

We distinguish two cases: If there are $x \in \omega$ and $\pi \in
2^{<\omega}$ such that, for $\sigma = \cT(1_L)(\pi)$,
\begin{itemize}
\item
$x \in \Phi_e[\sigma]$ and $x \in \Psi_i[\sigma]$, or
\item
for all $\tau \in \ran(\cT(a))$ extending~$\sigma$, neither $x \in
\Phi_e[\tau]$ nor $x \in \Psi_i[\tau]$,
\end{itemize}
then we may again assume without loss of generality that~$|\pi|$ is a
multiple of~$n$.
As before, we set $\cT^*(1_L)(\rho) = \cT(1_L)(\ca{\pi}{\rho})$ for all $\rho
\in 2^{<\omega}$; and again the condition that $[\Theta^*_{1_L,a}] =
[\Theta_{1_L,a}] \restr [\cT^*(1_L)]$ fully determines $\cT^*(a)$ for all $a
\in L'$.

Otherwise, we will first need to refine some notions from
Definition~\ref{def:ftree} as follows:
			
\begin{defn}\label{def:treerefd}
Suppose that $\tau = \cT(1_L)(\pi)$ and $\tau' = \cT(1_L)(\pi')$ such that
$|\tau| = |\tau'|$, and that~$\tau$ and~$\tau'$ $\str{e,i}$-split $\sigma =
\cT(1_L)(\rho)$ on~$\cT(1_L)$.
				
Then we say~$\tau$ and~$\tau'$ \emph{differ in} $a \in L'$ if $\tau_a =
\Theta_{1_L,a}(\tau) \neq \Theta_{1_L,a}(\tau') = \tau'_a$ (and so are
incomparable since they have the same length); otherwise,~$\tau$ and~$\tau'$
\emph{agree in} $a \in \eta(1_L)$. We say that~$\tau$ and~$\tau'$
\emph{differ minimally in} $a \in L'$ if they differ in $a \in L'$ and
this~$a$ is minimal such in~$L'$.

For each $\sigma \in \ran(\cT(1_L))$, we set
\begin{multline*}
\quad \cC(\sigma) = \{a \in L' \mid \\
		\exists \tau, \tau'\,
(\text{$\tau, \tau'$ minimally $\str{e,i}$-split~$\sigma$ on~$\cT(1_L)$
and minimally differ in~$a$})\}.
\end{multline*}
\end{defn}

Then we have the following
			
\begin{claim}\label{cl:Csigma}
\begin{enumerate}
\item\label{it:Clim}
$\sigma \subset \tau$ in $\ran(\cT(1_L))$ implies $\cC(\sigma) \supseteq
\cC(\tau)$.
\item\label{it:Cstable}
There is $\sigma_0 \in \ran(\cT(1_L))$ such that $\cC(\sigma_0) =
\cC(\sigma)$ for all $\sigma \supseteq \sigma_0$ in $\ran(\cT(1_L))$.
\item\label{it:Cnot}
Let~$\sigma_0$ be as in \eqref{it:Cstable}, and $a_0 = \bigvee
\cC(\sigma_0)$ (with $a_0 = 0_L$ if $\cC(\sigma_0) = \emptyset$). Then
there are no $\tau, \tau' \supset \sigma_0$ with $\tau, \tau' \in					
\ran(\cT(1_L))$, $|\tau| = |\tau'|$, $\tau_{a_0} = \tau'_{a_0}$, and which
$\str{e,i}$-split~$\sigma$ on~$\cT(1_L)$.
\end{enumerate}
\end{claim}
			
\begin{proof}
Item~\eqref{it:Clim} is obvious, and so is~\eqref{it:Cstable} since~$L'$ is
finite.
				
For~\eqref{it:Cnot}, assume for a contradiction that there are~$\tau$
and~$\tau'$ with $\tau_{a_0} = \tau'_{a_0}$, and such that~$\tau$ and~$\tau'$
$\str{e,i}$-split~$\sigma_0$ on $\cT(1_L)$. By Lemma~\ref{lem:minsplit}, we
may in addition assume that they minimally~$\str{e,i}$-split~$\sigma_0$ via
some $j < n$. Let $a \in L'$ be such that~$\tau$ and~$\tau'$ differ minimally
in~$a$. Then $j \in \eta(a)$, but this implies $a \in \cC(\sigma_0)$. Since
$\tau_{a_0} = \tau'_{a_0}$, we have $j \notin \eta(a_0)$. So $a \nleq_{L'}
a_0$, contradicting our choice of~$a_0$. \qedhere
%
\end{proof}

We can now define~$\cT^*$, using the values of~$a_0$ and~$\sigma_0$ above,
where we again assume that~$|\pi|$ is a multiple of~$n$ for $\sigma_0 =
\cT(1_L)(\pi)$. We first set $\cT^*(1_L)(\str{}) = \sigma_0$. We then proceed
by recursion on $m = l \cdot n + j$: Assume that we have already defined
$\cT^*(1_L)(\rho)$ for all $\rho \in 2^m$, and that for any such~$\rho$,
we have defined $\cT^*(1_L)(\rho) = \cT(1_L)(\pi(\rho))$, where~$|\pi(\rho)|$
is a multiple of~$n$. We now distinguish two cases:
			
\subsubsection*{Case~1: $j \notin \eta(a_0)$:}

Then we set, for all $\rho \in 2^m$ and all $k<2$,
$$
\cT^*(1_L)(\catt{\rho}{k}) =
\cT(1_L)(\catttt{\pi(\rho)}{0^j}{k}{0^{n-j-1}}).
$$
			
\subsubsection*{Case~2: $j \in \eta(a_0)$:}

Then $j \in \eta(a)$ for some $a \in \cC(\sigma_0)$. Hence there are~$\pi_0$
and~$\pi_1$ such that $\cT(1_L)(\ca{\pi(\str{0^m})}{\pi_0})$ and
$\cT(1_L)(\ca{\pi(\str{0^m})}{\pi_1})$ minimally $\str{e,i}$-split
$\cT^*(\str{0^m})$ in $\cT(1_L)$ and differ minimally in~$a$. Extending if
necessary, we may further assume that $|\pi_0| = |\pi_1|$ is divisible by
$n$.
			
We then set, for all $\rho \in 2^m$ and all $k<2$,
\[
\cT^*(1_L)(\catt{\rho}{k}) =
\cT(1_L)(\cattttt{\pi(\rho)}{\pi_k}{0^j}{k}{0^{n-j-1}}).
\]

And, again, the condition that $[\Theta^*_{1_L,a}] = [\Theta_{1_L,a}] \restr
[\cT^*(1_L)]$ fully determines $\cT^*(a)$ for all $a \in L'$.
			
Since~$\cT$ was very uniform,~$\cT^*$ remains very uniform.

We now first establish, for Case~2, the following
			
\begin{claim}
For all $\rho \in 2^m$, we have that $\cT^*(1_L)(\catt{\rho}{0})$ and
$\cT^*(1_L)(\catt{\rho}{1})$ $\str{e,i}$-split $\cT^*(1_L)(\rho)$
on~$\cT^*(1_L)$ and differ in~$a_0$.
\end{claim}
			
\begin{proof}
By hypothesis, we have that $\cT(1_L)(\ca{\pi(\str{0^m})}{\pi_0})$
and $\cT(1_L)(\ca{\pi(\str{0^m})}{\pi_1})$
$\str{e,i}$-split $\cT^*(1_L)(\str{0^m})$ in $\cT(1_L)$. Thus there is~$x$
with $x \in \Psi_i[\cT(1_L)(\ca{\pi(\str{0^m})}{\pi_0})]$ and $x \in
\Phi_e[\cT(1_L)(\ca{\pi(\str{0^m})}{\pi_1})]$.
But then $x \in \Psi_i[\cT(1_L)(\ca{\pi(\str{0^m})}{\pi_0})]$ hinges only on
$(\ca{\pi(\str{0^m})}{\pi_0})(j') = 0$ for some $j' \in		
[|\pi(\str{0^m})|, |\cat{\pi(\str{0^m})}{\pi_0}|)$. Now this ensures that
$x \in \Psi_i[\cT(1_L)(\ca{\pi(\rho)}{\pi_0})]$ and $x \in
\Phi_e[\cT(1_L)(\ca{\pi(\rho)}{\pi_1})]$ for all $\rho \in 2^m$, so
$\cT^*(1_L)(\cat{\rho}{0})$ and $\cT^*(1_L)(\cat{\rho}{1})$ $\str{e,i}$-split
$\cT^*(1_L)(\rho)$ on~$\cT(1_L)$ and thus also on~$\cT^*(1_L)$.

Our definition of $\cT^*(1_L)(\catt{\rho}{k})$ for $k<2$ now also guarantees
that they differ in~$a_0$.
%
\end{proof}

Concluding the proof of Proposition~\ref{prop4}, we now observe that given
the definition of~$\cT^*(1_L)$, item~\eqref{prop4eta} of
Proposition~\ref{prop4} completely determines~$\eta^*$. Furthermore, we
define, for $a \in L'$ and $\rho \in 2^{<\omega}$, $\cT(a)(\rho) =
\Theta_{1,a}(\cT^*(1_L)(\rho))$. This immediately implies item~\eqref{prop4T}
and that $[\Theta_{a,b}^*] = [\Theta_{a,b}] \restr [\cT^*(a)]$ whenever $a
\geq_{L'} b$ in~$L'$.

It remains to verify item~\eqref{prop4eq} of Proposition~\ref{prop4}. So fix
$f_1 \in [\cT^*(1_L)]$ such that $(\Phi_e,\Psi_i)[f_1]$ is a valid
computation.

We first verify that $(\Phi_e,\Psi_i)[f_1] \le_Z f_{a_0}$, so fix an
argument~$x$. Effectively in~$f_{a_0}$, we can find $\rho \in 2^{<\omega}$
such that, for $\tau = \cT(1_L)(\rho)$:
\begin{itemize}
\item
$\tau_{a_0} = (f_{a_0} \restr |\tau|)_{a_0}$; and
\item
$x \in \Phi_e[\tau]$ or $x \in \Psi_i[\tau]$.
\end{itemize}
Then, by Claim~\ref{cl:Csigma}\eqref{it:Cnot}, $(\Phi_e,\Psi_i)[f_1](x) =
(\Phi_e,\Psi_i)[\tau](x)$ as desired.

On the other hand, let's now verify that $f_{a_0} \le_Z
(\Phi_e,\Psi_i)[f_1]$. Then~$f_{a_0}$ at an argument~$x'$, say, is determined
by~$f_1$ at an argument~$x$.
			

If there is $k<2$ and some~$\ell$ such that every $\sigma \in \ran(\cT^*)$ of
length~$\ell$ has $\sigma(x) = k$, then $f_1(x) = k$ and so $f_{a_0}(x') =
k$. Otherwise, there are $\sigma, \tau \in \ran(\cT^*)$ such that $\sigma(x)
= 0 < 1 = \tau(x)$. Since~$\cT^*$ is very uniform, we may in addition assume
that there is $\rho \in 2^{< \omega}$ such that $\cT^*(\cat \rho 0) = \sigma$
and $\cT^*(\cat \rho 1) = \tau$. Now, by the way we constructed~$\cT^*$ (in
Case~2 of the proof of Proposition~\ref{prop4}), we know that $(\Phi_e,
\Psi_i)[\sigma](y) = 0 < 1 (\Phi_e, \Psi_i)[\tau](y)$ for some~$y$, and so
$f_{a_0}(x') = f_1(x) = (\Phi_e, \Psi_i)[f_1](y)$ can be Ziegler-computed from
$(\Phi_e, \Psi_i)[f_1]$ as claimed.
\end{proof}
		
\subsection{Proof of Theorem~\ref{thm:ctdl}}

Let the countable distributive lattice~$L$ be given by a (usually
noncomputable) approximation by finite distributive lattices~$L_s$ such
that~$L_0$ is the 2-element lattice, and each~$L_{s+1}$ either equals~$L_s$
or else is a simple extension of~$L_s$. We will define a (usually
noncomputable) decreasing sequence of conditions $\{(\eta^s, \cT^s)\}_{s \in
\omega}$; the unique infinite path~$f_1$ through all f-trees~$\cT^s$ will
represent the desired Ziegler degree.

We also define a (possibly non-effective) enumeration $\{(a_s, b_s, e_s,
i_s)\}_{s \in \omega}$ of all $a, b \in L$ and $e, i \in \omega$ such that
$a, b \in L_s$, $a \neq 1_L$ is join-irreducible in~$L_s$, and $b = b_a$ (in
the sense of~$L_s$, and so $b \in L_s$).
		
\subsubsection*{Stage $s = 0$:}

Let $\eta^0_{1_{L_0}}: L_0 \to \cP(\omega)$ be given by
$\eta^0_{L_0}(0_{L_0}) = \emptyset$ and $\eta^0_{L_0}(1_{L_0}) = \{0\}$. Let
$\cT^0(1_L)$ be the full binary f-tree and $\cT^0(0_L)$ be the degenerate
f-tree.
		
\subsubsection*{Stage $4s+1$:}

We obtain condition $(\eta^{4s+1}, \cT^{4s+1})$ from $(\eta^{4s}, \cT^{4s})$
using Proposition~\ref{prop1}.
		
\subsubsection*{Stage $4s+2$:}

We obtain condition $(\eta^{4s+2}, \cT^{4s+2})$ from $(\eta^{4s+1},
\cT^{4s+1})$ using Proposition~\ref{prop2} applied to the lattices~$L_s$
and~$L_{s+1}$.
		
\subsubsection*{Stage $4s+3$:}

We obtain condition $(\eta^{4s+3}, \cT^{4s+3})$ from $(\eta^{4s+2},
\cT^{4s+2})$ using Proposition~\ref{prop3} applied to the Ziegler reduction
$(\Phi_e, \Psi_i)$ and $a, b \in L$ where $(a, b, e, i) = (a_s, b_s, e_s,
i_s)$.
		
\subsubsection*{Stage $4s+4$:}

We obtain condition $(\eta^{4s+4}, \cT^{4s+4})$ from $(\eta^{4s+3},
\cT^{4s+3})$ using Proposition~\ref{prop4} applied to the Ziegler reduction
$(\Phi_e, \Psi_i)$ where $\str{e,i} = s$.
\smallskip
		
\subsubsection*{Verification}

We start by showing that if $a \geq_L b$, then $\mathbf{f}_b \leq_Z
\mathbf{f}_a$. Indeed, fix a stage $t = 4s+2$ such that $a,b \in L_s$.
Then~$f_a$ is on $\cT^t(a)$ and~$f_b$ is on $\cT^t(b)$, which are both
computable very uniform f-trees, and, using \eqref{it:trTheta*} of
Definition~\ref{def:partial-order}, we have $f_b = [\Theta^t_{a,b}](f_a)$.
Hence Lemma~\ref{lem:Theta-is-a-Z-functional} implies that $f_b \leq_Z f_a$.


At stages $s \equiv 2 \pmod 4$, we guarantee that each $a \in L$ is
introduced to the construction at some stage. At stages $s \equiv 3 \pmod 4$,
we ensure that $a \nleq_L b$ implies that $f_a \nleq^* f_b$. Combined with
the previous paragraph, this guarantees that there is an embedding of the
lattice~$L$ into $\cD_Z(\leq \mathbf{f}_1)$. At stages $s \equiv 4 \pmod 4$,
we force that if $\mathbf{a} \leq_Z \mathbf{f}_1$, then $\mathbf{a} \equiv_Z
\mathbf{f}_a$ for some $a \in L$. This completes the proof of
Theorem~\ref{thm:ctdl}.
\end{proof}

\end{document}